\def\?[#1]{\textbf{[#1]}\marginpar{\Large{\textbf{??}}}}
\def\smallsection#1{\smallskip\noindent\textbf{#1}.}
\let\epsilon=\varepsilon % sorry Knuth
\newcommand{\RR}{{\mathbb R}}
\newcommand{\NN}{{\mathbb N}}
\newcommand{\CC}{{\mathbb C}}
\newcommand{\HH}{{\mathbb H}}
\newcommand{\ZZ}{{\mathbb Z}}
\newcommand{\SP}{{\mathbb S}}
\newcommand{\CI}{{{\mathcal C}^\infty}}
\newcommand{\bCI}{{\bar{\mathcal C}^\infty}}
\newcommand{\dCI}{{\dot{\mathcal C}^\infty}}
\newcommand{\CIc}{{{\mathcal C}^\infty_{\rm{c}}}}
\newtheorem{thm}{Theorem}
\newtheorem{prop}{Proposition}%[section]
\newtheorem{lem}[prop]{Lemma}
\numberwithin{equation}{section}
\DeclareMathOperator{\Spec}{Spec}
\DeclareMathOperator{\Char}{Char}
\DeclareMathOperator{\Diff}{Diff}
\let\Im=\Imag
\DeclareMathOperator{\coker}{coker}
\let\Re=\Real
\DeclareMathOperator{\supp}{supp}
\DeclareMathOperator{\vol}{vol}
\DeclareMathOperator{\WF}{WF}
\title[Vasy's method revisited]{Resonances for asymptotically hyperbolic manifolds: Vasy's method
revisited}
\author{Maciej Zworski}
\email{zworski@math.berkeley.edu}
\address{Department of Mathematics, University of California,
Berkeley, CA 94720, USA}
\begin{document}

\begin{abstract}
We revisit Vasy's method \cite{vasy1},\cite{vasy2} for showing meromorphy of the resolvent for (even) asymptotically hyperbolic manifolds. It
provides an effective definition of resonances in that setting
by identifying them with poles of inverses of a family of Fredholm
differential operators. In the Euclidean case the method of 
complex scaling made this available since the 70's but 
in the hyperbolic case an effective definition was not known till 
\cite{vasy1},\cite{vasy2}. Here we present a simplified version which 
relies only on standard pseudodifferential techniques 
and estimates for hyperbolic operators. As a byproduct
we obtain more natural invertibility properties of the Fredholm family.

\end{abstract}

\maketitle

\section{Introduction}

We present a version of the method introduced by 
Andr\'as Vasy  \cite{vasy1},\cite{vasy2} to prove meromorphic continuations of resolvents of
Laplacians on even asymptotically hyperbolic spaces -- see 
\eqref{eq:gash}. That meromorphy was first established
for any asymptotically hyperbolic metric by Mazzeo--Melrose \cite{mm}.
Other early contributions were made by Agmon \cite{Ag}, Fay \cite{Fa}, Guillop\'e--Zworski \cite{GuZw}, Lax--Phillips \cite{LaPh}, Mandouvalos \cite{Man}, Patterson \cite{Pa} and Perry \cite{Pe}. 
Guillarmou \cite{g} showed that the evenness condition was needed for 
a global meromorphic continuation and clarified the construction given in 
\cite{mm}. 

Vasy's method is dramatically different from earlier approaches and is related to the
study of stationary wave equations for Kerr--de Sitter black holes -- see 
\cite{vasy1} and \cite[\S 5.7]{res}. Its advantage lies in relating
the resolvent to the inverse of a family of {\em Fredholm differential 
operators}. Hence, microlocal methods can be used to prove 
results which have not been available before, for instance
existence of resonance
free strips for non-trapping metrics \cite{vasy2}. 
Another application is the work 
of Datchev--Dyatlov \cite{DaDy} on the fractal upper
bounds on the number of resonances for (even) asymptotically hyperbolic 
manifolds and in particular for convex co-compact quotients of $ \HH^n $.
Previously only the case of convex co-compact Schottky quotients was known \cite{glz} and that was established using
transfer operators and zeta function methods. In the context of 
black holes the construction has been used to obtain 
a quantitative version of Hawking radiation \cite{Dr}, 
exponential decay of waves in the Kerr--de Sitter case \cite{Dy1}, 
the description of quasi-normal modes for perturbations of Kerr--de Sitter 
black holes \cite{Dy2}
and rigorous definition of quasi-normal modes for Kerr--Anti de Sitter 
black holes \cite{ga}. The construction of the Fredholm family 
also plays a role in the study of linear and
non-linear scattering problems -- see \cite{BaVWu}, \cite{HiV1}, \cite{HiV2}
and references given there. 

A related approach to meromorphic continuation, motivated by the study of 
Anti-de Sitter black holes, was independently developed by Warnick \cite{Wa}.
It is based on physical space techniques for hyperbolic equations
and it also provides meromorphic continuation of resolvents for even 
asymptotically hyperbolic metrics \cite[\S 7.5]{Wa}.

We should point out that for a large class of asymptotically Euclidean
manifolds an effective characterization of resonances has been known since the 
introduction of the method of complex scaling by Aguilar--Combes, 
Balslev--Combes and Simon in the 1970s -- see \cite[\S 4.5]{res} for an 
elementary introduction and references and \cite{WuZw} for a class
asymptotically Euclidean manifolds to which the method applies.

In this note we present a direct proof of meromorphic continuation 
based on standard pseudodifferential techniques and estimates for
hyperbolic equations which can found, for instance, in 
\cite[\S 18.1]{ho3}  and \cite[\S 23.2]{ho3} respectively.
In particular, we prove Melrose's {\em radial estimates} \cite{mel}
which are crucial for establishing the Fredholm property.
A semiclassical version of the approach presented here can be found in 
\cite[Chapter 5]{res} -- it is needed for the high energy results 
\cite{DaDy}, \cite{vasy2} mentioned above. 

\renewcommand\thefootnote{\dag}% 

We now define {even asymptotically hyperbolic manifolds}. 
Suppose that $  \overline M$ is a compact manifold with boundary $ \partial M \neq \emptyset$ of dimension $ n +1 $. We denote by $ M $ the interior of $ \overline  M $. 
The Riemannian manifold $ ( M , g ) $ is even asymptotically hyperbolic
if there exits functions $ y' \in \bCI ( M ; \partial M )$ and
 $ y_1  \in {\bCI} (  M ; (0, 2 )  ) $\footnote{We cannot
write a paper about Vasy's method without some footnotes: we follow the notation 
of \cite[Appendix B]{ho3} where $ \bCI ( M )$ denotes functions which 
are smoothly extendable across $ \partial M $ and $ \dCI (  \overline M ) $ functions which are 
extendable to smooth functions {\em supported} in $ \overline M $ -- see
\S \ref{Fredamp}.}, 
$ y_1|_{\partial M } = 0 $, $ dy_1|_{\partial 
M } \neq 0 $, such that 
\begin{equation}
\label{eq:coords}  \overline M \supset y_1^{-1} ( [0 , 1 ] ) \ni m \mapsto ( y_1( m), y'( m ) ) 
\in [ 0 , 1 ] \times \partial M \end{equation}
is a diffeomorphism, 
and near $ \partial M $ the metric has the form,
\begin{equation}
\label{eq:gash}   g|_{ y_1 \leq 1}  = \frac{ dy_1^2 + h ( y_1^2 )  }{ y_1^2 } ,  \end{equation}
where 
%\[ \text{
 $ [ 0, 1 ] \ni t \mapsto h ( t ) $, 
is a smooth family of Riemannian metrics on $ \partial M $.
%} \]
 For 
the discussion of invariance of this definition and of its geometric meaning
we refer to \cite[\S 2]{g}.

Let $ - \Delta_g \geq 0 $ be the Laplace--Beltrami operator for the metric $ g $.
Since the spectrum is contained in $ [ 0 , \infty ) $ the operator 
$- \Delta_g -\zeta ( n -\zeta ) $ is invertible on $ H^2 ( M , d
\vol_g ) $ for $ \Re\zeta > n $. Hence we can define
\begin{equation}
\label{eq:Res4Deltag}   R (\zeta) := ( - \Delta_g -\zeta ( n  - \zeta  ) )^{-1} : L^2 ( M, d\!\vol_g ) \to H^2 ( M, d \!\vol_g ) , \ \ 
\Re\zeta  > n . 
\end{equation}
We note that elliptic regularity shows that $ R ( \zeta ) : \dCI ( M ) \to 
\CI ( M )$, $ \Re\zeta  > n $. 
We also remark that as a byproduct 
of the construction we will show the well known fact that $ R ( \lambda ) 
: L^2 \to H^2 $ is meromorphic for $ \Re\zeta  > n/2 $: the poles correspond
to $ L^2 $ eigenvalues of $ - \Delta_g $ and hence lie in $ ( n/2, n ) $.

We will prove the result of Mazzeo--Melrose \cite{mm} and Guillarmou \cite{g}:
\begin{thm}
\label{t:1}
Suppose that $ ( M , g)  $ is an even asymptotically hyperbolic manifold
and that $ R ( \zeta ) $ is defined by \eqref{eq:Res4Deltag}.
Then 
\[    R (\zeta ) : \dCI ( M ) \to \CI ( M ) , \]
continues meromorphically from $ \Re\zeta > n $
to $ \CC$ with poles of finite rank.% For $ \Re \zeta > \frac n 2 $, 
%$ R ( \zeta )$ is meromorphic as an operator $ L^2 ( M ) \to L^2 ( M ) $.
\end{thm}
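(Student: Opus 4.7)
\smallskip
\noindent\textbf{Proof plan.} The plan is to follow Vasy's strategy and reduce the problem to the invertibility of a holomorphic family of Fredholm differential operators on an enlarged manifold obtained by extending $\overline M$ across $\partial M$.

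I would first work in the coordinates \eqref{eq:coords}. In the product form \eqref{eq:gash}, a direct computation gives
\[
-y_1^2\Delta_g = (y_1 D_{y_1})^2 + i\bigl(n + y_1 q(y_1^2,y')\bigr)(y_1 D_{y_1}) + y_1^2 \Delta_{h(y_1^2)},
\]
with $q$ smooth on $[0,1]\times\partial M$. Evenness means that only even powers of $y_1$ enter the coefficients, so introducing the new boundary defining function $\mu := y_1^2$ yields an operator whose coefficients are smooth in $(\mu,y')$ through $\mu = 0$. Conjugating by a suitable power of $\mu$ (chosen so that the indicial roots at $\mu = 0$ are balanced) and multiplying by a smooth weight in $\mu$ produces a second-order differential operator $P(\zeta)$, holomorphic in $\zeta$, whose coefficients extend smoothly across $\{\mu = 0\}$. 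Extending across $\partial M$ gives a manifold $X \supset \overline M$ on which $P(\zeta)$ is smooth: its principal symbol is elliptic on the original side $\{\mu>0\}$ and Lorentzian (of real principal type) on the added side $\{\mu<0\}$, with $\{\mu = 0\}$ a characteristic hypersurface on which the Hamilton flow has a radial source $L_-$ and a radial sink $L_+$ over $\partial M$ in the conormal directions.

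Next, I would prove the Fredholm property of $P(\zeta) : \mathcal X^s \to H^{s-1}(X)$, where $\mathcal X^s := \{u \in H^s(X) : P(\zeta)u \in H^{s-1}(X)\}$, with supports taken to one side of an artificial spacelike hypersurface placed on the Lorentzian extension. This combines three microlocal ingredients: elliptic estimates on $\{\mu>0\}$; H\"ormander's propagation of singularities \cite[\S 18.1]{ho3} along the bicharacteristic flow on the characteristic set in $\{\mu<0\}$; and Melrose's radial estimates at $L_\pm$, proved by a positive commutator argument whose admissible regularity threshold depends linearly on $\Im\zeta$. Hyperbolic energy estimates \cite[\S 23.2]{ho3} handle the spacelike cutoff. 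Running the analogous estimates for $P(\zeta)^*$ in the opposite direction yields the dual pair of Fredholm bounds
\[
\|u\|_{\mathcal X^s} \leq C\bigl(\|P(\zeta)u\|_{H^{s-1}} + \|u\|_{H^{-N}}\bigr), \qquad \|v\|_{H^{1-s}} \leq C\bigl(\|P(\zeta)^* v\|_{(\mathcal X^s)^*} + \|v\|_{H^{-N}}\bigr),
\]
valid for $s$ above the $\Im\zeta$-dependent threshold, so that $P(\zeta)$ is Fredholm of index $0$ on the corresponding half-plane in $\zeta$.

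Meromorphy then follows from the analytic Fredholm theorem. For $\Re\zeta \gg n$, the conjugation identifies $P(\zeta)^{-1}$ restricted to $M$ with $R(\zeta)$ up to smooth weights, and $R(\zeta)$ is invertible there by the spectral theorem; hence $P(\zeta)$ is invertible as well. Since $\zeta \mapsto P(\zeta)$ is holomorphic on $\CC$, the analytic Fredholm theorem produces a meromorphic inverse with poles of finite rank on the half-plane where $P(\zeta)$ is Fredholm; varying $s$ extends meromorphy to all of $\CC$ by uniqueness. Restricting and undoing the conjugation then recovers $R(\zeta) : \dCI(M) \to \CI(M)$ as the required meromorphic family. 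The main obstacle is the radial estimate at $L_\pm$: the positive commutator calculation at a radial source/sink (rather than at a point of principal type) is the delicate step, and the threshold condition it imposes is precisely what forces the anisotropic function spaces $\mathcal X^s$ and the $\zeta$-dependent half-plane of Fredholmness used above.
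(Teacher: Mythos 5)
Your proposal follows essentially the paper's approach, which is Vasy's method as streamlined here: pass to $\mu = y_1^2$, conjugate by a power of $\mu$ to produce a family $P(\lambda)$ of second-order differential operators with smooth coefficients across $\{\mu=0\}$, extend to a larger manifold $X$ on which $P(\lambda)$ is elliptic for $\mu>0$ and real principal type (hyperbolic) for $\mu<0$, establish the Fredholm property with anisotropic spaces by combining elliptic estimates, H\"ormander propagation, Melrose radial estimates at $N^*\{\mu=0\}$ with an $\Im\lambda$-dependent threshold, and hyperbolic energy estimates past a spacelike cutoff, and then invoke analytic Fredholm theory. All of that matches the paper's Theorems~\ref{t:2} and~\ref{t:3}.

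However, there is a genuine gap in your final invertibility step, which is the crux of the matter. You assert that $P(\zeta)$ is ``Fredholm of index $0$'' and then that ``$R(\zeta)$ is invertible \dots hence $P(\zeta)$ is invertible as well.'' Neither claim is immediate from what precedes. The two estimates you display give finite-dimensional kernel and cokernel and closed range, but they do not give index zero; the index is only determined once $P(\lambda_0)$ is shown to be bijective at a single $\lambda_0$. And bijectivity of $P(\lambda)$ on $X$ does not follow formally from invertibility of $R(\zeta)$ on $M$: the operator $P(\lambda)$ acts on the enlarged manifold, and its kernel and cokernel could in principle live on the Lorentzian cap $\{\mu<0\}$ or be concentrated at $\{\mu=0\}$. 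To get injectivity one must first use $\ker P(\lambda)\subset\bCI(X)$ (from the threshold regularity), restrict to $\{\mu>0\}$, undo the conjugation to obtain an $L^2_g$-eigenfunction of $-\Delta_g$ (this is where $\Im\lambda>0$ matters), conclude it vanishes for $\mu>0$, and then run a separate unique continuation argument across $\{\mu=0\}$ into the cap (the paper's Lemma~\ref{l:ahe}, an energy estimate with a singular weight). Surjectivity (trivial cokernel) is a further step: in the paper it requires Proposition~\ref{p:propu}, proved via Mellin transform asymptotics and the weighted resolvent bounds of Lemmas~\ref{l:0} and~\ref{l:01}, showing that for $\Im\lambda\gg 1$ the $L^2$-resolvent maps $\dCI(M)$ into $y_1^{-i\lambda+n/2}\bCI(X_1)$; this produces enough exact solutions to pair against candidate cokernel elements, together with an argument ruling out cokernel elements supported on $\{\mu=0\}$. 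Without some version of this two-part argument (or the semiclassical alternative mentioned in the paper's closing remark), your outline shows Fredholmness but does not establish invertibility anywhere, and hence cannot conclude meromorphy of $P(\lambda)^{-1}$ or recover $R(\zeta)$.
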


The key point however is the fact that $ R (\zeta) $ can be 
related to $ P( i ( \zeta - n/2)  )^{-1}$ where 
\[ \zeta \longmapsto  P ( i ( \zeta - n/2)  ) \]
 is a family of 
Fredholm differential operators -- see \S \ref{ffdo} and Theorem \ref{t:2}.
That family will be shown to be invertible for $ \Re\zeta > n $
which proves the meromorphy of $ P ( i ( \zeta - n/2) )^{-1} $ -- see Theorem \ref{t:3}. We remark that for $ \Re \zeta > \frac n 2 $, 
$ R ( \zeta )$ is meromorphic as an operator $ L^2 ( M ) \to L^2 ( M ) $ with 
poles corresponding to eigenvalues of $ - \Delta_g $.

The paper is organized as follows. In \S \ref{ffdo} we define the
family $ P ( \lambda ) $ and the spaces on which it has the Fredholm property. 
That section contains the main results of the paper: Theorems \ref{t:2}
and \ref{t:3}. In \S \ref{Fredamp} we recall the notation from the
theory of pseudodifferential operators and provide detailed references. 
We also recall estimates for hyperbolic operators needed here.
In \S \ref{radest} we prove Melrose's propagation estimates at radial 
points and in \S \ref{s:t1} we use them to show the Fredholm property. 
\S\ref{asym} gives some precise estimates valid for $ \Im \lambda 
\gg 1 $. Finally \S \ref{merc} we present invertibility of $ P ( \lambda ) $
for $ \Im \lambda \gg 1 $ and that proves the meromorphic continuation.
Except for references to \cite[18.1]{ho3} and \cite[23.2]{ho3} and 
some references to standard approximation arguments \cite[Appendix E]{res} (with 
material readily available in many other places) the paper is self-contained.

\smallsection{Acknowledgements}
I would like to thank Semyon Dyatlov and Andr\'as Vasy for helpful
comments on the first version of this note. 
I am particularly grateful to Peter Hintz for many suggestions and
for his help with the proof of Proposition \ref{p:hintz}. 
Partial support by the National Science Foundation 
under the grant DMS-1500852 is also gratefully acknowledged.

\section{The Fredholm family of differential operators}
\label{ffdo}

Let $ y' \in \partial M $ denote the variable on $ \partial M $.
Then \eqref{eq:gash} implies that near $ \partial M $, the Laplacian has the form 
\begin{gather}
\label{eq:Deltagg} 
\begin{gathered} - \Delta_g = ( y_1 D_{y_1} )^2 + i ( n  + y_1^2 \gamma ( y_1^2, y') ) y_1
D_{y_1} - y_1^2 \Delta_{h (y_1^2) } , \\
\gamma ( t, y') := - \partial_t \bar h ( t ) / \bar h ( t ) , \ \
\bar h ( t ) := \det h ( t ) . 
\end{gathered}
\end{gather}
Here  $ \Delta_{h (y_1^2 ) } $ is the Laplacian for the family of metrics on $ \partial M $ depending smoothly on $ y_1^2 $ and 
$ \gamma \in \CI ( [0,1]\times \partial M ) $. (The logarithmic derivative 
defining $ \gamma $ is independent of of the density on $ \partial M$ needed to 
define the determinant $ \bar h $.)

In \S \ref{asym} we will show that the unique $ L^2 $ solutions to 
\[ ( - \Delta_g -\zeta ( n -\zeta )  ) u =f \in 
\dCI ( M) , \ \ \Re\zeta > n,  , \ \ 
\]
satisfy
\[  u = y_1^{\zeta } \bCI ( M )  \ \ \ \text{ 
and $ \ \ \ \   y_1^{ -\zeta }u |_{ y_1 < 1 } = 
F ( y_1^2 , y') , \ \ F \in \bCI ( [0,1 ]\times \partial M )  $.} \]
Eventually we will show that the meromorphic continuation of the resolvent 
provides solutions of this form for all $\zeta \in \CC $ that are not poles
of the resolvent.

This suggests two things:

\begin{itemize}

\item To reduce the investigation to the study of smooth solutions we 
should conjugate $ - \Delta_g -\zeta ( n- \zeta) $ by the weight $ y_1^\zeta $.

\item The desired smoothness properties should be stronger in the 
sense that the functions should be smooth in $ (y_1^2 , y' ) $.

\end{itemize}

Motivated by this we calculate, 
\begin{equation}
\label{eq:firstconj} y_1^{ -\zeta}   ( - \Delta_g - \zeta ( n - \zeta)  ) y_1^{\zeta } =
x_1 P ( \lambda ) , \ \   x_1 = y_1^2 , \ \ x' = y', \ \lambda = i (\zeta - {\textstyle{\frac n 2 }})  , \end{equation}
where, near $ \partial M $, 
\begin{equation}
\label{eq:Plag} P ( \lambda ) = 4 ( x_1 D_{x_1}^2 - ( \lambda + i ) D_{x_1} ) 
- \Delta_h + i \gamma ( x ) \left( 2 x_1 D_{x_1} - \lambda - i 
{\textstyle\frac{ n-1} 2 } \right) . \end{equation}
The switch to $ \lambda $ is motivated by the fact that numerology 
is slightly lighter on the $ \zeta$-side for $ - \Delta_g $ and on the $\lambda$-side
for $ P ( \lambda ) $.

\renewcommand\thefootnote{\ddag}% 

To define the operator $ P ( \lambda ) $ geometrically we introduce a new
manifold using coordinates \eqref{eq:coords} and $ x_1 = y_1^2 $ for $ y_1 > 0 $:
\begin{equation}
\label{eq:coordX}  X = [ -1 , 1 ]_{x_1}  \times \partial M \sqcup \left( M \setminus y^{-1} 
(( 0, 1 ) ) \right). \end{equation}
We note that $ X_1 := X \cap \{ x_1 > 0 \} $ is diffeomorphic to $ M $ but
$ \overline X_1 $ and $ \overline M $ have different 
$ \CI $-structures\footnote{This construction appeared already in \cite[\S 2]{GuZw}
and $ P ( \lambda ) = Q ( n/4 - i \lambda/2) $ where $ Q ( \zeta ) $ was
defined in \cite[(2.6),(3.12)]{GuZw}. However 
the significance of $ Q ( \zeta ) $ did not become clear until \cite{vasy1}.}.

We can extend $ x_1 \to h ( x_1 ) $ to a family of smooth non-degenerate
metrics on $ \partial M $ on $ [-1,1]$. Using \eqref{eq:Deltagg} 
that provides a natural extension 
of the function $ \gamma $ appearing  \eqref{eq:firstconj}.

The Laplacian $ -\Delta_g $ is a self-adjoint operator on
$ L^2 ( M , d \vol_g ) $, where near $ \partial M $ and in the notation of 
\eqref{eq:Deltagg}, 
\[  d \vol_g =  y_1^{-n-1} \bar h ( y_1^2, y') dy_1 dy' , \]
where $d y' $ in a density on $ \partial M $ used to define the 
determinant $ \bar h = \det h $. The conjugation \eqref{eq:firstconj} shows
that for $ \lambda \in \RR $ ($ \zeta \in \frac n2 + i \RR $) 
$ x_1 P ( \lambda ) $ is formally self-adjoint with respect to 
$ x_1^{-1} \bar h ( x) dx_1 dx' $ and consequently $ P ( \lambda ) $
is formally self-adjoint for 
\begin{equation}
\label{eq:dmug}  d \mu_g =  \bar h ( x) dx . \end{equation}
This will be the measure used for defining $ L^2  ( X) $ in what follows.
In particular we see that the formal adjoint with respect to $ d \mu_g $ satisfies
\begin{equation}
\label{eq:formaladj}
P ( \lambda )^* = P ( \bar \lambda ) .
\end{equation}

We can now define spaces on which $ P ( \lambda ) $ is a Fredholm operator.
For that we denote by $ \bar H^s ( X^\circ ) $ the space
of restrictions of elements of $ H^s $ on an extension of $ X$ across the 
boundary to the interior of $ X $ -- see \cite[\S B.2]{ho3} and \S \ref{hypest} -- and put
\begin{equation}
\label{eq:hypXY}  \mathscr Y_s := \bar H^s ( X^\circ ) , \ \
\mathscr X_s := \{ u \in \mathscr Y_{s+1} : P ( 0 ) u \in \mathscr Y_s \}. 
\end{equation}
Since the dependence on $ \lambda $ in $ P ( \lambda ) $ occurs only in 
lower order terms we can replace $ P ( 0 ) $ by $ P ( \lambda ) $ in the
definition of $ \mathscr X $.

%\medskip

\noindent
{\bf Motivation:} Since for $ x_1 < 0 $ the operator $ P ( \lambda ) $
is hyperbolic with respect to surfaces $ x_1 = a > 0 $ the following
elementary example motivates the definition \eqref{eq:hypXY}.  Consider
$ P = D_{x_1}^2 - D_{x_2}^2 $ on $ [ -1, 0 ] \times \SP^1 $ and define
\[  Y_s := \{ u \in \bar H^s ( [-1, \infty ) \times \SP^1 ) : \supp u \subset 
[ -1, 0 ] \times \SP^1 \}, \ \ X_s := \{ u \in Y_{s+1} : P u \in Y_s \}. \]
Then standard hyperbolic estimates -- see for instance 
\cite[Theorem 23.2.4]{ho3} -- show that for any $ s \in \RR $, the operator
$ P : X_s \to Y_s $ is invertible. Roughly, the support condition 
gives $ 0 $ initial values at $ x_1 = 0 $ and hence $ P u = f $ can 
be uniquely solved for $ x_1 < 0 $. 

%\medskip

We can now state the main theorems of this note:
\begin{thm}
\label{t:2}
Let $ \mathscr X_s , \mathscr Y_s $ be defined in \eqref{eq:hypXY}.
Then for $ \Im \lambda > - s - \frac12 $ the operator
\[   P ( \lambda ) : \mathscr X_s \to \mathscr Y_s , \]
has the Fredholm property, that is 
\[  \dim \{ u \in \mathscr X_s : P ( \lambda ) u = 0 \} < \infty , \ \
\dim \mathscr Y_s / P ( \lambda ) \mathscr X_s < \infty , \]
and $ P ( \lambda ) \mathscr X_s $ is closed.
\end{thm}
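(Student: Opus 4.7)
The plan is to deduce the Fredholm property from a single \emph{semi-Fredholm estimate}
\[
\|u\|_{\mathscr{X}_s} \le C\bigl(\|P(\lambda)u\|_{\mathscr{Y}_s} + \|u\|_{\mathscr{Y}_{-N}}\bigr), \qquad u\in\mathscr{X}_s,
\]
together with its analogue for the formal adjoint $P(\lambda)^*=P(\bar\lambda)$ on the appropriate dual spaces. Combined with the compactness of the inclusion $\mathscr{X}_s\hookrightarrow\mathscr{Y}_{-N}$, this yields finite-dimensional kernel and cokernel and closed range by a standard functional-analytic argument.

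To prove the semi-Fredholm estimate I would analyze $P(\lambda)$ microlocally, splitting $T^*X\setminus 0$ according to the behavior of the principal symbol $p(x,\xi)=4x_1\xi_1^2+|\eta|^2_{h(x_1)}$. On $\{x_1>0\}$ the symbol is elliptic (both terms are nonnegative and jointly control $|\xi|^2$), so classical elliptic regularity from \cite[\S 18.1]{ho3} controls $u$ in $H^{s+1}$ microlocally. On $\{x_1<0\}$ the symbol factors as $(2\sqrt{-x_1}\,\xi_1-|\eta|)(2\sqrt{-x_1}\,\xi_1+|\eta|)$, so $P(\lambda)$ is strictly hyperbolic with respect to the hypersurfaces $\{x_1=\mathrm{const}\}$. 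As in the motivating example, the extendibility built into $\mathscr{Y}_s=\bar H^s(X^\circ)$ effectively supplies the data needed at the artificial boundary $x_1=-1$, and the energy estimate of \cite[Thm.~23.2.4]{ho3} then propagates control inward.

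The crux is the transition region at $x_1=0$. There $p$ vanishes exactly on the two radial lines $R_\pm=\{(0,x';\xi_1,0):\pm\xi_1>0\}$, and a direct computation shows that $H_p$ at $R_\pm$ is parallel to the fiber-radial vector field $\xi_1\partial_{\xi_1}$, making $R_\pm$ a source/sink pair in the sense of Melrose. Neither elliptic nor hyperbolic propagation applies, and one must use Melrose's radial-point estimate, to be proved in \S\ref{radest} by a positive commutator argument with an appropriately chosen weight. The subprincipal contribution at $R_\pm$ involves $\Im\lambda$ through the first-order coefficient $-4(\lambda+i)\xi_1$ of $D_{x_1}$, and positivity of the commutator at the radial set forces exactly the threshold $\Im\lambda>-s-\tfrac12$ appearing in the hypothesis; the outcome is a microlocal estimate $\|Bu\|_{H^{s+1}}\le C(\|P(\lambda)u\|_{H^s}+\|u\|_{H^{-N}})$ with $B$ elliptic at $R_\pm$.

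Patching the three regional estimates by a microlocal partition of unity produces the semi-Fredholm inequality for $P(\lambda)$. The cokernel bound is obtained by running the same procedure for $P(\bar\lambda)$ on the duals of $\mathscr{X}_s$ and $\mathscr{Y}_s$: under the pairing $\bar H^s(X^\circ)^*\simeq\dot H^{-s}$ the dual Sobolev index shifts, and the adjoint radial threshold becomes $\Im\bar\lambda<s+\tfrac12$, i.e.\ $\Im\lambda>-s-\tfrac12$, the very same condition. The main obstacle is unquestionably the radial-point estimate: choosing a commutant whose principal symbol tracks the flow correctly at $R_\pm$, ensuring that all error terms can be absorbed into the main term, and pinpointing the sharp threshold. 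The elliptic piece, the hyperbolic piece, and the final patching are essentially textbook once the radial estimate is in hand.
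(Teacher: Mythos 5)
Your overall strategy matches the paper: a three-region microlocal decomposition (elliptic on $\{x_1>0\}$, hyperbolic on $\{x_1<0\}$, radial at $\{x_1=0\}$), patched into an a priori estimate with compact error, together with a companion estimate for $P(\bar\lambda)$ to control the cokernel. The threshold computation you sketch is essentially right (though note that the subprincipal symbol contribution to $\sigma(Q)$ comes not only from $-4(\lambda+i)D_{x_1}$ but also from the skew part of $4x_1D_{x_1}^2$; these combine to give $\sigma(Q)\sim -4\Im\lambda\,\xi_1$, which after the normalization by $4$ produces exactly $s_+=s_-=-\Im\lambda-\tfrac12$).

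The phrase \emph{``running the same procedure for $P(\bar\lambda)$''} hides the place where your sketch has a real gap. The cokernel lives in $\dot H^{-s}(X)\simeq\mathscr{Y}_s^*$, which is \emph{low} regularity, and at that regularity the radial \emph{source} estimate (the one you invoked for the kernel, with no auxiliary term) is simply not available: it would require $-s>\Im\lambda+\tfrac12$, the opposite of the hypothesis. What one must use instead is Melrose's radial \emph{sink} estimate, which does apply at regularity $-s$ for $s>-\Im\lambda-\tfrac12$, but at the price of an extra term $\|Bu\|_{H^{-s}}$ with $B$ microsupported near, but off, the radial set — see \eqref{eq:sinkest}. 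That term cannot be absorbed by commutator positivity alone. In the paper this is handled by a genuinely different mechanism on the adjoint side: an element $v\in\dot H^{-s}(X)$ of $\ker P(\bar\lambda)$ satisfies a hyperbolic equation with \emph{supported} initial data on $\{x_1=-1\}$, so the energy estimate \eqref{eq:hypePest} forces $v\equiv 0$ on $\{x_1<0\}$; once $v$ is supported in $\overline{X_1}$ the set $\WF(Bv)$ lies in the elliptic region and $\|Bv\|$ is controlled by a compact term. Without recognizing the source/sink dichotomy and without this vanishing argument, the cokernel estimate does not close. Relatedly, the directions are reversed between the two sides: for $u\in\bar H^{s+1}$ the hyperbolic estimate propagates control from the radial region outward to $x_1=-1$ (which plays the role of a \emph{final} surface, so extendibility asks nothing there), whereas for $v\in\dot H^{-s}$ the support condition at $x_1=-1$ is the \emph{initial} datum and propagation runs toward $x_1=0$. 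Filling in this asymmetry — sink estimate plus hyperbolic uniqueness, and the dualization $\coker_{\mathscr X_s}P(\lambda)=\ker_{\dot H^{-s}}P(\bar\lambda)$ of Lemma~\ref{l:kerP} — is what remains to make your outline a proof.
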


The next theorem provides invertibility of $ P ( \lambda ) $ for
$ \Im \lambda > 0  $ and that shows the meromorphy of $ P ( \lambda )^{-1} $
-- see \cite[Theorem C.4]{res}. We will use that in Proposition 
\ref{p:hintz} to show the well known fact that
in addition to Theorem \ref{t:1} $ R ( \frac n 2 - i \lambda ) $ is meromorphic
on $ L^2 ( M , d \! \vol_g ) $ for $ \Im \lambda > 0 $. 

\begin{thm}
\label{t:3}
For $ \Im \lambda > 0 $, $ \lambda^2 + (\frac n 2 )^2 \notin 
\Spec ( - \Delta_g ) $ and $ s > -\Im \lambda - \frac12 $, 
\[ P ( \lambda ) : \mathscr X_s \to \mathscr Y_s \]
is invertible. Hence,  for $ s \in \RR $ and $ \Im \lambda > - s - \frac12 $,
$ \lambda \mapsto P ( \lambda )^{-1} : \mathscr Y_s \to \mathscr X_s , $
is a meromorphic family of operators with poles of finite rank.
\end{thm}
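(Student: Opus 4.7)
The plan is to combine the Fredholm property from Theorem \ref{t:2} with analytic Fredholm theory via two ingredients: invertibility of $P(\lambda)$ at a single point of the Fredholm strip $\{\Im\lambda > -s - \tfrac12\}$, and a spectral uniqueness argument to identify the remaining invertible locus.

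First I would establish invertibility in the regime $\Im\lambda \gg 1$ (the content promised for \S\ref{asym}--\S\ref{merc}). In that regime $\Re\zeta = n/2 + \Im\lambda$ is large, so $R(\zeta):L^2(M, d\vol_g)\to H^2$ is available on the ``physical side'' $X_1\cong M$; through the conjugation \eqref{eq:firstconj} it yields a solution of $P(\lambda) u = f$ on $X_1$ lying in $\mathscr{X}_s$. On the ``extension side'' $\{x_1<0\}$ the operator $P(\lambda)$ is strictly hyperbolic in $x_1$, and the $\bar H^s$-support condition in $\mathscr{Y}_s$ forces zero Cauchy data at $x_1=0$; standard hyperbolic solvability (\cite[Theorem 23.2.4]{ho3}) then produces a unique solution there, with the energy constants tamed by the large parameter $\Im\lambda$. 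Patching yields invertibility.

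With one invertible point in the connected Fredholm strip, the analytic Fredholm theorem (\cite[Theorem C.4]{res}) upgrades $P(\lambda)^{-1}$ to a meromorphic family with finite-rank poles and forces $\mathrm{index}\, P(\lambda)\equiv 0$ throughout. To conclude invertibility at every $\lambda$ with $\Im\lambda>0$ and $\lambda^2+(n/2)^2\notin\Spec(-\Delta_g)$ it then suffices to prove injectivity. Suppose $u\in\mathscr{X}_s$ satisfies $P(\lambda) u = 0$. On $X_1\cong M$ set $v:=y_1^{\zeta} u$; the conjugation \eqref{eq:firstconj} gives $(-\Delta_g-\zeta(n-\zeta))v=0$. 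Using $d\vol_g=y_1^{-n-1}\bar h\,dy_1\,dx'$ and $x_1=y_1^2$ one computes
\[
\|v\|_{L^2(d\vol_g)}^2 \;=\; \tfrac12\int_{X_1}|u|^2\, x_1^{\Im\lambda - 1}\,\bar h\,dx,
\]
which is finite provided $u$ has enough boundary regularity --- guaranteed, after a radial-estimate bootstrap from \S\ref{radest}, by $u\in\mathscr{X}_s$ together with $\Im\lambda>0$. Elliptic regularity of $-\Delta_g$ then promotes $v$ to $H^2(M,d\vol_g)$, and the spectral hypothesis forces $v\equiv 0$, hence $u\equiv 0$ on $X_1$. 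Finally, on $\{x_1<0\}$ the operator $P(\lambda)$ is hyperbolic in $x_1$ with $\{x_1=0\}$ noncharacteristic, so Hörmander's Cauchy uniqueness (\cite[Theorem 23.2.4]{ho3}) propagates $u\equiv 0$ into the extension as well.

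The main obstacle I expect is the invertibility at $\Im\lambda\gg 1$: morally clear from the dominance of the $\lambda$-dependent lower-order terms, but turning this into a quantitative estimate compatible with $\mathscr{X}_s$ --- especially uniformly across the gluing at $x_1=0$ between the degenerate-elliptic and hyperbolic regions --- is the technical payload of \S\ref{asym}. A secondary subtlety is the boundary regularity of kernel elements needed to make the weighted $L^2$-bound on $v$ legitimate for $\Im\lambda$ only slightly positive; this is exactly what the radial estimates of \S\ref{radest} provide through iterated regularity improvement at the source/sink of the Hamilton flow over $\{x_1=0\}$.
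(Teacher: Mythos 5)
Your overall architecture is sound and matches the paper's: prove invertibility at a single point with $\Im\lambda\gg 1$, invoke analytic Fredholm theory to get index $0$ and meromorphy, and separately prove injectivity for all $\Im\lambda>0$ off the spectrum. The injectivity argument (conjugate by $y_1^{\zeta}$, check $L^2_g$-membership, invoke the spectral hypothesis to kill $u|_{X_1}$) is precisely what the paper does. However, there are two genuine gaps.

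First, your final step of injectivity — propagating $u\equiv 0$ from $x_1>0$ into $x_1<0$ via ``Hörmander's Cauchy uniqueness (\cite[Theorem 23.2.4]{ho3})'' — does not go through as stated. Once $u|_{X_1}=0$ and $u\in\bCI(X)$, we know $u$ vanishes to infinite order at $\{x_1=0\}$, but $P(\lambda)=4x_1D_{x_1}^2+\dots$ \emph{degenerates} there: $\{x_1=0\}$ is characteristic, not a non-characteristic spacelike hypersurface, so Theorem 23.2.4 does not apply across it. The paper needs a separate argument (Lemma \ref{l:ahe}) — a Carleman-type energy estimate with weights $|x_1|^{-N}$, $N\to\infty$, exploiting the infinite order of vanishing at $x_1=0$ — to conclude $u\equiv 0$ in $x_1<0$. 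Standard hyperbolic uniqueness only applies once you already know $u$ vanishes on a slab $\{x_1>-\epsilon\}$.

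Second, the invertibility at $\Im\lambda\gg 1$ by ``patching'' the resolvent solution on $X_1$ with a hyperbolic solution on $\{x_1<0\}$ elides the decisive difficulty: why does the $L^2$ resolvent solution on $X_1$ extend as an element of $\bar H^{s+1}(X^\circ)$ (equivalently $\bCI(X_1)$ in the $\CIc$-datum case) across $x_1=0$? A priori the solution could be singular as $x_1\to 0^+$. Establishing that the conjugated resolvent maps $\CIc(X_1)\to\bCI(X_1)$ is exactly Proposition \ref{p:propu}, proved in the paper via a Mellin-transform asymptotic expansion in $x_1$; evenness of the metric is essential here. In fact, the paper sidesteps the direct patching you propose and instead proves surjectivity by duality: it identifies $\coker P(\lambda)$ with $\ker P(\bar\lambda)$ on $\dot H^{-s}(X)$ (Lemma \ref{l:kerP}), shows via hyperbolic estimates that any such kernel element is supported in $\overline X_1$ and cannot be concentrated on $\{x_1=0\}$ (a distributional argument with $\delta^{(k)}$'s), and then pairs against $\bCI(X_1)$ solutions supplied by Proposition \ref{p:propu} to reach a contradiction. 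That duality route avoids ever having to glue solutions across the degeneracy, and is substantially more than the estimate-tuning your proposal anticipates.
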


For interesting applications it is crucial to consider the
semiclassical case, that is, uniform analysis as $ \Re \lambda \to 
\infty $ -- see \cite[Chapter 5]{res} -- 
 but to indicate the basic mechanism behind the meromorphic 
continuation we only present the Fredholm property and invertibility 
in the upper half-plane.

\section{Preliminaries}
\label{Fredamp}

Here we review the notation and basic facts need in the 
proofs of Theorems \ref{t:2} and \ref{t:3}. 

\subsection{Pseudodifferential operators}
\label{pseudo}

We use the notation of \cite[\S 18.1]{ho3} and 
for $ X $,  an open $ \CI $-manifold 
we denote by $ \Psi^m ( X ) $ the space of {\em properly supported}
pseudodifferential operators of order  $m $. (The operator $ A : 
\CIc ( X ) \to \mathcal D' ( X) $ is properly supported if
the projections from support of the Schwartz kernel of $ A $ in $ X \times X $ 
to each factor are proper maps, that is inverse images of compact sets
are compact. The support of the Schwartz kernel of any differential
operator is contained in the diagonal in $ X \times X $ and clearly 
has that property.) 

For $ A \in \Psi^m (  X) $ we denote by 
$ \sigma ( A ) \in 
S^{m}  ( T^*X \setminus 0 ) / S^{m-1} ( T^*X \setminus 0 ) $
the symbol of $ A $, sometimes writing $ \sigma ( A ) = a \in 
S^m ( T^* X \setminus 0 ) $ with an understanding that $ a $ is a
{\em representative} from the equivalence class in the quotient.

We will use the following basic properties of the symbol map:
if $ A \in \Psi^m ( X ) $ and $ B \in \Psi^k ( X ) $ then 
\begin{gather*} \sigma ( A  B ) = \sigma ( A )\sigma (  B) \in S^{m+k}/ S^{m+k-1} 
, \\
\sigma ( [ A, B ] ) = H_{ \sigma ( A ) } \sigma ( B ) \in 
S^{m+k -1} / S^{m+k-2} , \end{gather*}
where for $ a \in S^m $,  $ H_a $ is the {\em Hamiton vector field} of $ a $.

For any operator $ P \in \Psi^m ( X) $ we 
can define $ \WF ( P ) \subset T^* X \setminus 0 $ 
(the smallest subset outside of which $ A $ has order $ - \infty $
-- see \cite[(18.1.34)]{ho3}). We also define $ \Char ( P) $ the 
smallest conic closed set outside of which $ P $ is {\em elliptic} -- 
see \cite[Definition 18.1.25]{ho3}. 
A typical application of the symbolic calculus and of this notation is
the following statement \cite[Theorem 18.1.24$'$]{ho3}: if $ P \in \Psi^m ( X ) $
and $ V $ is an open conic set such that $ V \cap \Char ( P) = \emptyset $
then there exists $ Q \in \Psi^{-m} ( X ) $ such that 
\begin{equation} 
\label{eq:WFPQ}    \WF ( I - P Q ) \cap V = \WF ( I - Q P  ) \cap V = \emptyset . 
\end{equation}
This means that $ Q $ is a {\em microlocal} inverse of $ P $ in $ V $.

We also recall that the operators in $ A \in \Psi^m ( X ) $ have mapping properties
\[  A : H^s_{\rm{loc} } ( X ) \to H^{s-m}_{\rm{loc}} ( X ) , \ \
 A : H^s_{\rm{comp} } ( X ) \to H^{s-m}_{\rm{comp}} ( X ) , \ \ 
 s \in \RR .\]
Combined with \eqref{eq:WFPQ} we obtain the following {\em elliptic} 
estimate: if $ A, B \in \Psi^0 ( X ) $ have 
{\em compactly supported} Schwartz kernels, 
 $ P \in \Psi^m ( X )$ and 
\[ \WF ( A ) \cap ( \Char ( B ) \cup \Char ( P ) ) = \emptyset , \]
then for any $N $ there exists $ C $ such that
\begin{equation}
\label{eq:elle4P}
\| A u \|_{ H^{s+m} } \leq C \| B P u \|_{H^s} + C \| u \|_{ H^{-N}}. 
\end{equation}

\subsection{Hyperbolic estimates}
\label{hypest} 

If $ X $ is a smooth compact manifold with boundary we follow \cite[\S B.2]{ho3} 
and define Sobolev spaces of extendible distributions, $ \bar H^s ( X^\circ ) $ 
and of supported distributions $ \dot H^s ( X ) $. Here $ X = X^\circ \sqcup 
\partial X $ and $ X^\circ $ is the interior of $ X $. These are modeled
on the case 
of $ X = \overline \RR^{n}_{+} $, $ \RR_+^n := \{ x \in \RR^n : x_1 > 0 \}$
in which case
\begin{gather*}
  \bar H^s( \RR_+^n ) = \{ u : \exists \, U \in H^s ( \RR^n ) , \  u= U|_{ 
x_1 > 0 }\}  , \\ \dot H^s ( \overline \RR^n_+ ) := \{ 
u \in H^s ( \RR^n ) : \supp u \subset \overline \RR^n_+ \}. \end{gather*}

The key fact is that the $ L^2 $ pairing (defined using a smooth density on $ X $)
\[   \dCI ( X ) \times  \bCI ( X^\circ ) \ni ( u , v ) \mapsto 
\int_X u ( x ) \bar v ( x ) dx , \]
extends by density to $ ( u , v ) \in \dot H^{-s} ( X ) \times \bar H ( X^\circ ) $
and provides the identification of dual spaces, 
\begin{equation}
\label{eq:duality} ( \bar H^s ( X^\circ ) )^* \simeq \dot H^{-s} ( X ) , \ \ s \in \RR . \end{equation}

Suppose that $ P = D_t^2 + P_1 ( t, x, D_x ) D_t + P_0 ( t, x , D_x ) $, $ x \in N $, where
$ N $ is a compact manifold and $ P_j  \in \CI ( \RR_t ; \Psi^{2-j} ( N ) ) $
is strictly hyperbolic with respect to the level surfaces $ t = \rm{const} $ -- 
see \cite[\S 23.2]{ho3}. 
For any 
$ T > 0 $ and $ s \in \RR $, we define
\[   \widetilde H^s ( [0 , T ) \times N) = 
\left\{ u : u = U|_{ [ 0 , T ) \times N } , \ \ U \in 
 H^s ( \RR \times N ), 
\ \supp U \subset [ 0 , \infty )  \times N \right\}, \]
with the norm defined as infimum of $ H^s $ norms over all $ U \in H^s $
with $ u_{ [ 0 , T) } = U $. (These spaces combines the $ \dot H^s $ space
at the $ t = 0 $ with $ \bar H^s $ at $ t = T $.)

Then
\begin{equation}
\label{eq:hypeP}
\forall \, f \in \widetilde H^s ( [ 0 , T ) \times N ) \, \ 
\exists \, ! \, u \in \widetilde H^{s+1} ( [0 , T ) \times N ) , \ \ 
P u = f , 
\end{equation}
and 
\begin{equation}
\label{eq:hypePest}
\| u \|_{ \widetilde H^{s+1} ( [ 0 , T ) \times N ) } 
\leq C \| f \|_{\widetilde H^s ( [ 0 , T ) \times N ) },
\end{equation}
see \cite[Theorem 23.2.4]{ho3}. 

If we define
\[  Y_s := \widetilde H^s ( [ 0 , T ) \times N ) , \ \
X_s := \{ u \in Y_{s+1} : P u \in Y_s \} \]
then  $ P : X_s \to Y_s $ is {\em invertible}. 
In our application we will need the following estimate which
follows from the invertibility of $ P $:
if $ u \in \bar H^s ( ( 0 , T ) \times N ) $ then for any $ \delta > 0 $, 
\begin{equation}
\label{eq:hypest1}
\| u \|_{ \bar  H^{s+1}  ( ( 0 , T )\times N ) } \leq C 
\| P u \|_{ \bar H^s ( ( 0 , T )  \times N ) } + C \| u \|_{ 
\bar H^{s+1}  ( ( 0 , \delta  ) ) \times N ) } .
\end{equation}

The operator $ P ( \lambda ) $ defined in \eqref{eq:Plag} is of the form 
$ x_1 ( D_{x_1}^2 - P_1 ( x ) D_{x_1} + P_0 ( x , D_{x'} ) ) $ where $ P_1 \in \CI $ and $ P_0  $ is 
elliptic for $ -1 \geq x_1 < - \epsilon < 0 $, for any fixed $ \epsilon$.
That means that for $ t = 1 + x_1 $ and $ T = 1 - \epsilon $ or
$ t = - \epsilon - x_1 $, $ T = 1 - \epsilon $, the operator is (up to the
non-zero smooth factor $ x_1 $) is of the form to which estimates
\eqref{eq:hypePest} and \eqref{eq:hypest1} apply.

We will also need an estimate valid all the way to $ x_1 = 0 $:

\begin{lem}
\label{l:ahe}
Suppose that 
$  u \in \dCI ( X \cap \{ x_1 \leq 0 \} ) $ and 
$ P ( \lambda ) u = 0 $. 
Then $ u \equiv 0 $.
\end{lem}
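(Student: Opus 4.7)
The plan is to exploit the strict hyperbolicity of $P(\lambda)$ on $\{x_1 < 0\}$ together with the infinite-order vanishing of $u$ at $x_1 = 0$, via the hyperbolic estimate \eqref{eq:hypest1}.

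First observe that by \eqref{eq:Plag}, the rescaled operator $\tilde P(\lambda) := P(\lambda)/(4x_1)$ is strictly hyperbolic on $\{x_1 < 0\}$ with respect to the level sets $\{x_1 = c\}$: its principal symbol $\xi_1^2 - |\eta|^2_{h^{-1}}/(4|x_1|)$ has two distinct real roots $\xi_1 = \pm|\eta|/(2\sqrt{|x_1|})$ whenever $\eta \neq 0$. The hypothesis $u \in \dCI(X \cap \{x_1 \leq 0\})$ says that $u$ extends to a smooth function on $X$ supported in $\{x_1 \leq 0\}$, so $u$ vanishes to infinite order at $\{x_1 = 0\}$; in particular, for every $N$, $\|u\|_{\bar H^1(-c < x_1 < 0)} \leq C_N c^N$.

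For each small $\epsilon > 0$, parametrize $t = -\epsilon - x_1 \in (0, 1-\epsilon)$ and apply \eqref{eq:hypest1} to $\tilde P(\lambda) u = 0$ on the slab $\{-1 < x_1 < -\epsilon\} \times \partial M$ with cutoff width $\delta = \epsilon$:
\[
\|u\|_{\bar H^1(-1 < x_1 < -\epsilon)} \leq C(\epsilon)\, \|u\|_{\bar H^1(-2\epsilon < x_1 < -\epsilon)} \leq C(\epsilon)\, C_N \epsilon^N.
\]
Inspecting the standard $L^2$-energy derivation of \eqref{eq:hypest1} for $\tilde P(\lambda)$, the constant $C(\epsilon)$ is controlled by $\exp \int_\epsilon^1 |\partial_{x_1}\log a|\, dx_1$ where $a = (4|x_1|)^{-1}$ is the sound-speed squared; since $|\partial_{x_1}\log a| = O(1/|x_1|)$ this integral is $O(\log(1/\epsilon))$, yielding the \emph{polynomial} bound $C(\epsilon) \leq C\epsilon^{-K}$ for some fixed $K$. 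Choosing $N > K$ and letting $\epsilon \to 0$, the right-hand side tends to zero; by monotone convergence, $\|u\|_{\bar H^1((-1, 0) \times \partial M)} = 0$, so $u \equiv 0$ on $[-1,0] \times \partial M$. Since $u$ is supported in $\{x_1 \leq 0\}$ --- which lies entirely in this slab --- one concludes $u \equiv 0$ on $X$.

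The main obstacle is the polynomial (rather than exponential) dependence of $C(\epsilon)$ on $1/\epsilon$: this is not a direct consequence of the black-box statement of \eqref{eq:hypest1}, but requires revisiting its derivation. The decisive point is that the logarithmic derivative of the singular coefficient $|x_1|^{-1}$ is itself integrable up to a logarithm, so the Gronwall exponent is only $O(\log(1/\epsilon))$; any superpolynomial blow-up would be fatal, as the decay from vanishing to infinite order could not compensate.
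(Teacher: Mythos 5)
Your argument is essentially the paper's: a weighted energy estimate near $\{x_1=0\}$ whose constant grows only polynomially in $1/\epsilon$, defeated by the infinite-order vanishing of $u$. The gap you acknowledge --- that a polynomial bound on $C(\epsilon)$ is not a black-box consequence of \eqref{eq:hypest1} --- is exactly what the paper supplies by writing down an explicit Carleman-type energy identity with weight $|x_1|^{-N}$ and applying Stokes on $[-\delta,-\epsilon]\times\partial M$; your logarithmic-derivative heuristic is the correct reason this works, and making it rigorous amounts to checking (as the paper's identity does implicitly) that every singular coefficient of $P(\lambda)/(4x_1)$, not just the sound speed but also the $D_{x_1}$ and zeroth-order terms, is $O(1/|x_1|)$ so that the Gronwall exponent is only $O(\log(1/\epsilon))$.
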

As pointed out by Andr\'as Vasy 
this follows from general 
properties of the de Sitter wave equation \cite[Proposition 5.3]{vasy3}
but we provide a simple direct proof.

\begin{proof}%[Proof of \eqref{eq:ahe}:]
We note that if $ u|_{ x_1 \geq - \epsilon } = 0 $ 
for some $ \epsilon > 0 $ then $ u \equiv 0 $  
by \eqref{eq:hypePest}. That follows from energy estimates. We want to make that
argument quantitative.  We will work in $ [ -1, -\epsilon ] \times \partial M $
and define $ d : \CI ( \partial M ) \to \CI ( \partial M ; T^* \partial M ) $ 
to be the differential. We denote by $ d^* $ its Hodge adjoint with
with 
respect to the ($ x_1 $-dependend) metrics $ h $,
$d^*_h :  \CI ( \partial M ; T^* \partial M ) \to \CI ( \partial M ) $. Then
\[  P ( \lambda ) = 4 x_1 D_{x_1}^2 + d^*_h d - 4 ( \lambda + i ) D_{x_1} 
-  i \gamma ( x ) ( 2x_1 D_{x_1}  - \lambda - i {\textstyle{\frac{n-1}2}} ) . \]
Since for $ f \in \CI ( \partial M )  $ and any fixed $ x_1$, $ h = h ( x_1 ) $, 
\[ \begin{split} \int_{\partial M }  d_{h}^* ( v  d u  ) \bar f \, d \! \vol_h & = \int_{\partial M } \langle v d u , d f \rangle_h \,  d \! \vol_h %\\ & 
= \int_{\partial M } 
\left( \langle du , d ( \bar v f ) \rangle_h - \langle d u , d \bar v \rangle_h \bar f\, 
\right) d\! \vol_h \\ 
& =  \int_{\partial M }  \left(  v d^*_h d u -  \langle d u , d \bar v \rangle \right) \bar f\,  d \!\vol_h ,
\end{split} \]
we conclude that 
 $ d_h^* ( v du ) = v d^*_h d u -  \langle d u , d \bar v \rangle_h $. 
From this we derive the 
following form of the energy identity valid for $ x_1 < 0 $:
\[ \begin{split}  &  
\partial_{x_1} \left( |x_1|^{-N} ( -x_1 |\partial_{x_1} u |^2 + 
| d u |_h^2 + | u |^2 ) \right) + 
|x_1|^{-N} d_h^* \left( \Re ( \bar u_{x_1} d u ) \right) = \\ 
& \ \ 2 \Re |x_1|^{-N} \bar u_{x_1}  P ( \lambda ) u  
% & \ \ \ \ \ \ - 
-  N |x_1|^{-N-1} \left( - x_{1} | u_{x_1}|^2 + | du |_h^2 + |u|^2 \right) + 
|x_1|^{-N} R ( \lambda , u ) , \end{split}
\] 
where $ R ( \lambda, u ) $ is a quadratic form in $ u $ and $ du $, independent
of $ N $. 
We now fix $ \delta >  0 $ and apply Stokes theorem in $ [ - \delta, -\epsilon ]
\times M $. For $ N $ large enough (depending on $ \lambda $) 
that gives
\[ \begin{split}
\int_{\partial M  } ( | u_{x_1}|^2 + | d u |_h^2 )|_{x_1 = \delta } 
\, d \vol_h 
& \leq 
C \epsilon^{-N} \int_{\partial M  } 
( | u_{x_1}|^2 + | d u |_h^2 )|_{x_1 = \epsilon }  \, d \vol_h  \\
& \leq C_K \epsilon^{-N+K} , \end{split}\]
for any $ K $, as $ \epsilon \to 0 + $ (since $ u $ vanishes
to infinite order at $ x_1 = 0 $). By choosing $K > N $ we see
that the left hand side is $ 0 $ and that implies that $ u $ is
zero. 
\end{proof}

\section{Propagation of singularities at radial points}
\label{radest}

To obtain meromorphic continuation of the resolvent
\eqref{eq:Res4Deltag} we need propagation 
estimates at {\em radial} points. 
These estimates were developed by Melrose \cite{mel} in the 
context of scattering theory on asymptotically Euclidean spaces
and are crucial in the Vasy approach \cite{vasy1}. A semiclassical 
version valid for very general sinks and sources was given in 
Dyatlov--Zworski \cite{dz} (see also \cite[Appendix E]{res}).

To explain this estimates we first review the now 
standard results on propagation of singularities due to H\"ormander 
\cite{ho-pc}. Thus let 
$ P \in \Psi^m ( X ) $, with a real valued symbol $p := \sigma ( P ) $.
Suppose that in  an open conic subset of $ U \subset T^*X \setminus 0 $, $ \pi ( U ) \Subset X $ ($ \pi : T^*X \to X $),
\begin{equation}
\label{eq:condp}   
p ( x , \xi ) = 0 ,  
\ ( x, \xi ) \in U \ \Longrightarrow \ \text{ $ H_p  $ and $ \xi \partial_\xi  $ are linearly independent at $ ( x , \xi ) $.}
\end{equation}
Here $ H_p $ is the Hamilton vector field of $ p $ and $ \xi \partial_\xi $ 
is the {\em radial} vector field. The latter is invariantly defined 
as the generator of the $ \RR_+ $ action on $ T^* X \setminus 0 $
(multiplication of one forms by positive scalars). 

The basic propagation estimate is given as follows: suppose that
$ A, B , B_1  \in \Psi^0 ( X ) $ and
$  \WF ( A ) \cup \WF ( B ) \subset U $, $ \WF ( I - B_1 ) \cap U = \emptyset$.

We also assume that that $ \WF ( A ) $ is
{\em forward controlled} by $ \complement \Char ( B ) $ 
in the following sense: for any $ ( x, \xi ) \in \WF(A) $ there exists $ T > 0$ 
such that 
\begin{equation}
\label{eq:control4P}
\exp ( - T H_p ) ( x, \xi )  \notin \Char ( B ), \ \   
\exp ( [-T , 0 ] H_p ) ( x, \xi)  \subset U ,
\end{equation}
 The forward control can be replaced by backward control,
that is we can demand existence of $ T < 0 $. That is allowed since the symbol
is real.  

The crucial estimate is then given by 
\begin{equation}
\label{eq:DH}  \| A u \|_{ H^{s+m-1} } \leq C \| B_1 P u \|_{H^{s}} 
+ C \| B u \|_{ H^{s+m-1} } + C \| u \|_{ H^{-N} } , 
\end{equation}
where $ N $ is arbitrary and $ C $ is a constant depending on $ N $. 
A direct proof can be found in \cite{ho-pc}. The estimate is valid
with  $ u \in \mathcal D' ( X ) $ for which the right hand side
is finite -- see \cite[Exercise E.28]{res}. 

We will consider a situation in which the condition \eqref{eq:condp} is violated. We will work on the manifold $ X $ given by \eqref{eq:coordX},
near $ x_1 = 0 $.  In the notation of \eqref{eq:condp} we assume that, near $ x_1 = 0 $,
\begin{gather}
\label{eq:modred}   
\begin{gathered} 
P \in \Diff^2 ( X ) , \ \ p = \sigma ( P ) =  x_1 \xi_1^2 + q ( x, \xi' ) , \ \ 
q ( x_1 , x' , \xi') := 
| \xi' |_{ h ( x_1, x') }^2, 
\end{gathered}
\end{gather} 
$ ( x', \xi' ) \in T^* \partial M $, $( x, \xi ) \in T^* X \setminus 0 $. 
The Hamilton vector field is given by 
\begin{equation}
\label{eq:Hpmod}  H_p = \xi_1 ( 2 x_1 \partial_{x_1} - \xi_1 \partial_{\xi_1} ) + 
\partial_{x_1} q ( x, \xi') \partial_{ \xi_1 }  +  H_{q( x_1) }  , \end{equation}
where $ H_{ q ( x_1 ) } $ is the Hamilton vectorfield of 
$ ( x' , \xi' ) \mapsto q ( x_1, x' ,\xi') $ on $ T^* \partial M $. 

We see that the condition \eqref{eq:condp} is violated at 
\begin{gather}
\label{eq:radial1}
\begin{gathered}   \Gamma =  \{ ( 0 , x' , \xi_1 , 0 ) : x' \in \partial M , \xi_1 \in \RR  \setminus 0 \} \subset T^* X \setminus 0 ,   \\ 
\Gamma = N^*Y \setminus 0 , \ \ Y := \{ x_1 = 0 \} . 
\end{gathered}
\end{gather}
In fact, $ H_p |_{ N^* Y } = -\xi_1 ( \xi \partial_\xi |_{ N^* Y } ) $. 
Nevertheless Propositions \ref{p:rad1} and \ref{p:rad2} below provide
propagation estimates valid in spaces with restricted regularity. 

We note that $ \Gamma = p^{-1} ( 0 ) \cap \pi^{-1} ( Y )   $ and that near 
$ \pi^{-1} ( Y ) $, $ \Sigma=: p^{-1} ( 0  ) $ has two {\em disjoint} connected 
components:
\begin{gather}
\label{eq:Sigmapm}
\begin{gathered}  
\Sigma = \Sigma_+ \sqcup \Sigma_- , \ \ 
\ \ \Gamma_\pm := \Sigma_\pm \cap \Gamma , \\
\Sigma_\pm \cap 
\{ |x_1| < 1 \} := \{ (- q ( x, \xi')/\rho^2 , x', \rho , \xi' ) : \pm \rho > 0 , \
|x_1 | < 1  \} .
\end{gathered}
\end{gather}
The set $ \Gamma_+ $ is a source and $ \Gamma_- $ is a sink for the flow
projected to the sphere at infinity -- see Fig.~\ref{f:radial}.

\begin{figure}
\includegraphics[scale=1]{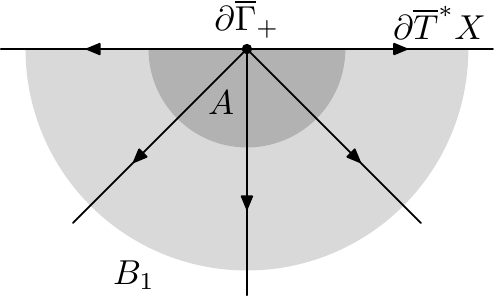}
\quad
\includegraphics[scale=1]{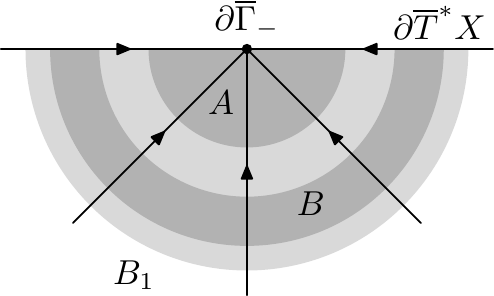}
\caption{An illustration of the behaviour of 
the Hamilton flows for radial sources and for radial sinks
and 
of the localization of operators in the estimates 
\eqref{eq:sourcest} and \eqref{eq:sinkest} respectively.
 The horizontal line
on the top denotes the boundary, $\partial\overline T^*X$,
of the {\em fiber--compactified} cotangent bundle $ \overline T^*X$. 
The shaded half-discs then correspond to conic neighbourhoods
in $ T^* X $. In the simplest example of $ X = ( - 1 , 1 ) \times \RR/\ZZ $, 
and $ p = x_1 \xi_1^2 + \xi_2^2 $, $ H_p = 
\xi_1 ( 2 x_1 \partial_{x_1} - \xi_1 \partial_{\xi_1} ) + 2 \xi_2 \partial_{x_2} $,
$ x_2 \in \RR/\ZZ $.
Near $ \partial \overline \Gamma_\pm $ explicit (projective) 
compactifications is given by $ r = 1/|\xi_1|$, (so that
$ \partial \overline T^* X = \{ r = 0 \} $), $ \theta = 
\xi_2/|\xi_1| $, with $ x $ (the base variable) unchanged.
In this variables, near $ \partial \overline \Gamma_\pm $ (boundaries of compactifications of $ \Gamma_\pm$ we check that 
$ r \partial_r = - \xi_1 \partial_{\xi_1 } - \xi_2 \partial_{\xi_2}$
and $  \theta \partial_\theta = \xi' \partial_{\xi'} $.
Hence near $ \Gamma_\pm $, 
$ H_p = \pm r (  \theta \partial_\theta + 
r \partial_r + 2 x_1 \partial_{x_1 }  + 2 \theta \partial_{x_2} )$ 
and (after rescaling) we see a source and a sink.}
\label{f:radial}
\end{figure}

We now write $ P $ as follows:
\begin{equation}
\label{eq:PQR}
P = P_0 + i Q , \ \   P_0 = P_0^* , \ \ Q = Q^* , 
\end{equation}
where the formal $ L^2$-adjoints are taken with respect to the density 
$ dx_1 d\vol_h $. 

We can now formulate the following propagation result at the source. 
We should stress that changing $ P $ to $ - P $ changes a source into 
a sink and the relevant thing is the sign of $ \sigma ( Q ) \in S^1/S^0 $
which then changes -- see \eqref{eq:s01} below.

We first state a {\em radial source estimate}:
\begin{prop}
\label{p:rad1}
In the notation of \eqref{eq:Sigmapm} and \eqref{eq:PQR} put 
\begin{equation}
\label{eq:s01}   s_+= \sup_{\Gamma_+}   |\xi_1|^{-1} \sigma ( Q ) - \textstyle{ \frac12} , \end{equation}
and take $ s > s_+ $.  For any $ B_1 \in \Psi^0 ( X ) $ satisfying
$ \WF ( I - B_1 )  \cap \Gamma_+ = \emptyset $ there exists $ A \in \Psi^0 ( X )  $ 
with $ \Char ( A ) \cap \Gamma_+ = \emptyset $ such that 
for $ u \in \CIc ( X ) $
\begin{equation}
\label{eq:sourcest}   \| A u \|_{ H^{s+1} } \leq C \| B_1 P u \|_{ H^s } + 
 C\| u \|_{H^{-N}} , \end{equation}
for any $ N $.% and $ u \in \mathcal D' ( X ) $, 
%$ \supp u \cap \partial X  = \emptyset $, for which the right hand side is finite.
\end{prop}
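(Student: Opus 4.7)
The plan is a positive commutator argument of Melrose-Vasy type, exploiting the source structure of $\Gamma_+$ for the rescaled Hamilton flow. First I would construct a real-valued commutant symbol
\[
a(x,\xi) \;=\; \chi(x_1)\,\psi\!\left(\frac{|\xi'|^2_{h}}{\xi_1^2}\right)\phi(\xi_1)\,|\xi_1|^{s+\frac12},
\]
where $\chi$ cuts off near $x_1=0$, $\psi$ localizes near $\xi'=0$, and $\phi(\xi_1)$ is supported in $\xi_1\gg 1$, arranged so that $\WF(\Op(a)) \subset \{B_1 \text{ elliptic}\}$ and the supports of $\chi',\psi',\phi'$ either lie in regions where $B_1$ is elliptic or lie along flowlines of $H_p$ that enter $\Gamma_+$ only in the forward direction. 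Take $A\in\Psi^{s+1/2}(X)$ properly supported with principal symbol $a$ and real subprincipal symbol.

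The key computation is to expand, using $P=P_0+iQ$ as in \eqref{eq:PQR},
\[
\tfrac{1}{i}\bigl(A^*AP - P^*A^*A\bigr) \;=\; \tfrac{1}{i}[A^*A,P_0] \;+\; \bigl(A^*AQ + QA^*A\bigr),
\]
whose principal symbol at order $2s+2$ is $H_p(a^2)+2\sigma(Q)\,a^2$. On $\Gamma_+$ the identity $H_p|_{N^*Y}=-\xi_1(\xi\cdot\partial_\xi)$ from \eqref{eq:Hpmod} gives
\[
H_p(a^2) + 2\sigma(Q)a^2\bigr|_{\Gamma_+} \;=\; -\bigl((2s+1) - 2|\xi_1|^{-1}\sigma(Q)\bigr)\,\xi_1\,a^2,
\]
which, under the threshold hypothesis $s>s_+$, is uniformly $\le -c_0\,a^2\,|\xi_1|$ on a conic neighborhood of $\Gamma_+$ for some $c_0>0$. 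The remaining terms are of three types: (i) a main negative-definite piece of the form $-b^2$ where $b\in S^{s+1}$ is elliptic on $\Gamma_+$; (ii) error pieces $e_1\in S^{2s+2}$ whose support lies in the region where $\chi',\psi'$ or $\phi'$ are nonzero and which, by the choice of cutoffs together with the source behavior of the flow at $\Gamma_+$, is contained in $\{B_1 \text{ elliptic}\}$; and (iii) subleading terms of order $2s+1$ absorbed into $\|u\|_{H^{-N}}$ after elliptic/standard propagation estimates.

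I would then pair with $u$: writing $2\Im\langle Pu,A^*Au\rangle = \langle \tfrac{1}{i}(A^*AP-P^*A^*A)u,u\rangle$, applying the sharp G\aa{}rding inequality to the main term gives $\|Bu\|_{L^2}^2 \lesssim |\langle Pu, A^*Au\rangle| + |\langle \Op(e_1)u,u\rangle| + \|u\|_{H^{-N}}^2$. Cauchy-Schwarz on the first right-hand term, bounded via $\|B_1Pu\|_{H^s}\,\|Au\|_{L^2}$ and absorbed into the left, produces the $\|B_1Pu\|_{H^s}$ contribution; for the error term from (ii) I would invoke either ellipticity \eqref{eq:elle4P} or the standard propagation estimate \eqref{eq:DH} (enlarging $B_1$ along the forward flowouts if necessary) to estimate it by $C\|B_1Pu\|_{H^s}^2 + C\|u\|_{H^{-N}}^2$. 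Finally, $\|Au\|_{H^{s+1}}$ is equivalent to $\|Bu\|_{L^2}$ modulo lower-order terms since $\sigma(A)$ is elliptic of order $s+1/2$ on $\WF(A)$; replacing $A$ by any fixed $A'$ with $\Char(A')\cap\Gamma_+=\emptyset$ and $\WF(A')\subset\WF(A)$ yields \eqref{eq:sourcest}.

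The delicate step, and the main obstacle, is item (ii): arranging the cutoffs so that the support of $H_p a$ (outside the scaling piece) lies either where $B_1$ is elliptic or where propagation from such a region reaches it. The source structure of $\Gamma_+$ is essential here: any $H_p$-flowline starting near but off $\Gamma_+$ escapes the chosen conic neighborhood backwards within uniform time, so the cutoff derivatives $\chi',\psi'$ can indeed be placed outside any prescribed small neighborhood of $\Gamma_+$, and the classical H\"ormander estimate \eqref{eq:DH} then controls them. A secondary technical point is the regularization: although the statement is for $u\in\CIc(X)$, to make the absorption-of-$\|Au\|_{L^2}$ step rigorous I would insert $\Lambda_\varepsilon\in\Psi^{-1}(X)$ with $\Lambda_\varepsilon\to I$ in $\Psi^{0+}$, replace $A$ by $A\Lambda_\varepsilon$, obtain $\varepsilon$-uniform bounds, and pass $\varepsilon\to 0$.
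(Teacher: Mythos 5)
Your setup (positive commutator, Melrose–Vasy commutant $a$, sharp G\aa{}rding, second step eliminating the lower-order term via \eqref{eq:DH} and interpolation, and the principal-symbol calculation on $\Gamma_+$) matches the paper's proof, and the threshold computation is correct. However, the proposal has a genuine gap in exactly the step you yourself flag as ``the main obstacle'': the treatment of the cutoff-derivative errors.

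The paper handles these errors not by propagation or ellipticity but by a \emph{sign condition on the cutoffs}: $\psi_1$ is chosen with $t\psi_1'(t)\le 0$, so that in \eqref{eq:fHfs} the terms $2x_1\psi_1'(x_1)\psi_1(\cdot)$ and $2(q/\xi_1^2)\psi_1'(q/\xi_1^2)\psi_1(x_1)$ have the \emph{same favorable sign} as the main term $-(s+\frac12)\psi_1\psi_1$, yielding the clean pointwise inequality \eqref{eq:fsign} with no error on the characteristic set. This is precisely what makes a source estimate of the form \eqref{eq:sourcest} possible \emph{without} any $\|Bu\|$ term on the right; it is the essential feature distinguishing the source case from the sink case of Proposition~\ref{p:rad2}, where the cutoff derivatives have the wrong sign and a $\|Bu\|$ term is unavoidable. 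Your proposal drops the sign condition and instead attempts to control $\langle \Op(e_1)u,u\rangle$ (with $e_1\in S^{2s+2}$, the same order as the main term) by \eqref{eq:elle4P} or \eqref{eq:DH}. Neither works as stated: (a) one cannot arrange the cutoffs so that $\supp\chi'\cup\supp\psi'$ avoids $\Sigma_+$ — if the $\chi$-cutoff width $\delta$ is larger than the $\psi$-width $\delta'$ you avoid $\Sigma_+\cap\supp\chi'$ but hit $\Sigma_+\cap\supp\psi'$, and vice versa, so ellipticity cannot cover all error terms; and (b) propagation from $\Sigma_+\cap\supp\psi'$ via \eqref{eq:DH} is circular, because the only available controlling region is $\Ell(A)\supset\Gamma_+$, which produces a $\|Bu\|_{L^2}$ term of the same strength as what you are trying to estimate, with no small constant to permit absorption. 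Forward flow from these error points does not escape to an elliptic region either: on $\Sigma_+$ one has $x_1=-q/\xi_1^2\le 0$, and forward flow from there moves into $x_1<0$ where $P$ is hyperbolic, not elliptic. Finally, your statement that ``any $H_p$-flowline starting near but off $\Gamma_+$ escapes the chosen conic neighborhood \emph{backwards} within uniform time'' reverses the dynamics of a source: at $\Gamma_+$ trajectories escape \emph{forward}, while backward trajectories converge to $\Gamma_+$ and never leave a small conic neighborhood. In short, the sign condition $t\psi_1'(t)\le 0$ is not a technical convenience but the load-bearing ingredient, and the propagation argument you substitute for it does not close.
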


%\medskip

\noindent
{\bf Remarks.} 1. The supremum in \eqref{eq:s01} should be understood
as being taken at the $ \xi$-infinity or as
$ s_+ = \sup_{ x' \in \partial M }  \lim_{\xi_1 \to \infty }  |\xi_1|^{-1} \sigma ( Q ) ( 0, x', \xi_1 , 0 )  - \frac12 $. 

\noindent
2.  An approximation argument -- see \cite[Lemma E.42]{res} -- 
shows that \eqref{eq:sourcest} is  
valid for $ u \in H^{-N} $, $ \supp u \cap X = \emptyset $, 
such that $ B_1 u \in H^{s+1} $, $ B_1 P u \in H^{s} $.

\noindent
3. Using a regularization argument -- see for instance 
\cite[\S 3.5]{ho-pc} or \cite[Exercises E.28, E.33]{res} -- 
\eqref{eq:sourcest} holds for all $ u \in \mathcal D' ( X )$, 
$ \supp u \subset K $ where $ K $ is a fixed compact subset of 
$ X^\circ $,  
such that $ B_1 u \in H^{r} $ for some $ r > s_+ +1 $.
In particular, when combined with the hyperbolic estimate
\eqref{eq:hypest1}, that gives 
\begin{equation}
\label{eq:PtoC} 
P u \in \bCI (  X) , \ \ u \in \bar H^r ( X ) , \ \ r > s_+ + 1 
\ \Longrightarrow \ u \in \bCI ( X ) . \end{equation} 
In fact, the smoothness near $ x_1 = 0 $ is obtained from 
the estimate \eqref{eq:sourcest} and elliptic 
estimates applied to $ \chi u $, $ \chi \in \CIc ( X ) $ and
then the hyperbolic estimates show smoothness for $ x_1 < - \epsilon $.

\noindent
4. To see that the threshold \eqref{eq:s01} is essentially optimal for 
\eqref{eq:PtoC}  
we consider $ X = ( - 1, 1 ) \times \RR/\ZZ $ and
$ P = x_1 D_{x_1}^2 - i ( \rho + 1 ) D_{x_1} - D_{x_2}^2 $, $ x_2 \in 
\RR/\ZZ $, $ \rho \in \RR $. In this case $ s_+ = -\rho - \frac12 $.
Put $  u ( x ) := \chi ( x_1 ) (x_1)_{+}^{-\rho} $, $  \rho \notin - \NN $, and
and note that 
\[ ( x_1 D_{x_1}^2 - i ( \rho + 1 ) D_{x_1 }) (x_1)_+^{-\rho} = 0 . \]
Hence $ P u \in \CIc ( X ) $ and $ u \in H^{-\rho + \frac12 - } \setminus
H^{ - \rho + \frac12} $. 

%\medskip

The {\em radial sink estimate} requires a control condition similar to that in 
\eqref{eq:control4P}. There is also a change in the regularity condition.

\begin{prop}
\label{p:rad2}
In the notation of \eqref{eq:Sigmapm} and \eqref{eq:PQR} put 
\begin{equation}
\label{eq:s02}  s_- = \sup_{ \Gamma_- } |\xi_1|^{-1} \sigma ( Q ) - \textstyle {\frac12} , \end{equation}
and take $ s > s_- $.  For any $ B_1 \in \Psi^0 ( X ) $ satisfying
$ \WF ( I - B_1 ) \cap \Gamma_- = \emptyset $ there exist $ A , B \in \Psi^0 ( X ) $ 
such that 
\[  \Char ( A ) \cap \Gamma_- = \emptyset , \ \ \WF ( B ) \cap \Gamma_- = 
\emptyset \]
and for 
$ u \in \CIc ( X ) $,
\begin{equation}
\label{eq:sinkest}
\| A u \|_{ H^{-s} } \leq C \| B_1 P u \|_{ H^{-s-1} } + 
C \| B u \|_{H^{-s} } + C \| u \|_{H^{-N}} , \end{equation}
for any $ N $. 
\end{prop}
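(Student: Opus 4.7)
The plan is a positive commutator argument, parallel to the one that establishes Proposition \ref{p:rad1} but with the roles of ``regularity gained'' and ``regularity assumed'' interchanged. Write $P = P_0 + iQ$ as in \eqref{eq:PQR}. For a self-adjoint $\mathcal{A} \in \Psi^{-2s-1}(X)$ one has
$$2\,\Im\langle P u, \mathcal{A} u\rangle = \left\langle \left(\tfrac{1}{i}[P_0,\mathcal{A}] + \mathcal{A} Q + Q \mathcal{A}\right) u, u\right\rangle,$$
whose principal symbol is $-H_p a + 2\sigma(Q)\, a$ with $a = \sigma(\mathcal{A})$. The aim is to choose $a \geq 0$ so that this combination has a definite sign on $\Gamma_-$, and then to feed the resulting symbol inequality into a Sharp Gårding / Cauchy--Schwarz scheme.

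I would take $a = \chi(x)\,\psi(\xi'/|\xi_1|)\,|\xi_1|^{-2s-1}$, with $\chi$ localizing to a small neighbourhood of $\{x_1 = 0\}$ and $\psi$ localizing to a small conic neighbourhood of $\Gamma_-$ (where $\xi_1 < 0$ and $\xi'/|\xi_1|$ is small). The key symbolic computation uses that on $\Gamma$ the Hamilton field is purely radial, $H_p|_{\Gamma} = -\xi_1\,(\xi\partial_\xi)|_{\Gamma}$, so applying $H_p$ to the factor $|\xi_1|^{-2s-1}$ (homogeneous of degree $-2s-1$) produces a term $-(2s+1)|\xi_1|\,|\xi_1|^{-2s-1}$ with a definite sign determined by that of $\xi_1$. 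Combined with $2\sigma(Q) a$ and the defining inequality $|\xi_1|^{-1}\sigma(Q) \leq s_- + \tfrac12$ on $\Gamma_-$, this yields, on $\Gamma_-$,
$$-H_p a + 2\sigma(Q)\, a \;\leq\; -2(s - s_-)\,\chi\psi\,|\xi_1|^{-2s} + r,$$
where $r$ is supported in the region where $H_p$ hits one of the cutoffs $\chi,\psi$. For $s > s_-$ the main term is strictly negative on $\Gamma_-$, which is the sole source of positivity in the argument.

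The error $r$ is the essential new feature in the sink case. Because at $\Gamma_-$ the flow points \emph{into} $\Gamma_-$, the support of $H_p\chi$ and $H_p\psi$ lies on the ``incoming'' side of the cutoff -- a region disjoint from $\Gamma_-$. I would pick $B \in \Psi^0(X)$ elliptic on that incoming set with $\WF(B) \cap \Gamma_- = \emptyset$; Sharp Gårding then converts the symbol inequality into an operator inequality of the form
$$2(s - s_-)\,\|A u\|_{H^{-s}}^2 \;\leq\; 2\,|\langle P u, \mathcal{A} u\rangle| + C\|B u\|_{H^{-s}}^2 + C\|u\|_{H^{-N}}^2,$$
where $A \in \Psi^0(X)$ is chosen so that $A^*\Lambda^{-2s}A \equiv \mathcal{A}$ modulo lower order, with $\Lambda$ the standard order-one elliptic operator. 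For the first right-hand term I use that $B_1$ is elliptic on $\WF(\mathcal{A})$ to write $\mathcal{A} = E B_1 + R$ with $E \in \Psi^{-2s-1}$, $R$ smoothing, and then Cauchy--Schwarz bounds it by $C\|B_1 P u\|_{H^{-s-1}}\|Au\|_{H^{-s}}$ plus a harmless $\|u\|_{H^{-N}}$ contribution; absorbing $\|A u\|_{H^{-s}}$ into the left-hand side delivers \eqref{eq:sinkest} for $u \in \CIc(X)$.

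The main obstacle, as always in threshold radial estimates, is ensuring that the positivity detected at the principal-symbol level is not destroyed by subprincipal errors: the threshold $s_-$ in \eqref{eq:s02} contains an additive constant $-\tfrac12$ precisely to absorb the half-order loss in Sharp Gårding, and one has to verify that the commutant can be chosen with a smooth square root so that Sharp Gårding (rather than the less sharp Fefferman--Phong) is available. A secondary point, less serious, is that $X$ has a boundary: the cutoff $\chi$ must be localized strictly inside $X^\circ$ near $\{x_1 = 0\}$, which is automatic given that the phenomenon is concentrated there, and the behaviour for $x_1 < 0$ is handled separately by the hyperbolic estimate \eqref{eq:hypest1}.
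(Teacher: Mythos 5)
Your proposal follows essentially the same positive-commutator path as the paper: same choice of commutant order (a self-adjoint $\mathcal{A}\in\Psi^{-2s-1}$, which is what the paper's $G_s^*G_s$ with $G_s\in\Psi^{-s-1/2}$ produces), same conic localization near $\Gamma_-$, same use of the radial nature of $H_p$ on $\Gamma$ to extract the homogeneity-degree factor, same appeal to the threshold $s>s_-$, and -- the essential new feature in the sink case -- the observation that the cutoff-derivative terms have the wrong sign but are microsupported away from $\Gamma_-$ and can be absorbed into the $\|Bu\|_{H^{-s}}$ term. This matches the paper's one-line remark that ``$b=\sigma(B)$ is chosen to control the terms involving $t\psi_1'(t)$ (which now have the `wrong' sign)''. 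Two small slips: first, the commutator identity should read $2\Im\langle Pu,\mathcal{A}u\rangle=\langle(\tfrac1i[\mathcal{A},P_0]+\mathcal{A}Q+Q\mathcal{A})u,u\rangle$, not $\tfrac1i[P_0,\mathcal{A}]$, so the principal symbol is $+H_pa+2\sigma(Q)a$, not $-H_pa+2\sigma(Q)a$; with the sign you wrote, the displayed bound $\leq -2(s-s_-)\chi\psi|\xi_1|^{-2s}+r$ does not follow from the preceding computation of $H_pa$ (indeed $-H_pa+2\sigma(Q)a$ is \emph{positive} near $\Gamma_-$), whereas with the correct sign it does -- so you have two compensating sign errors, and the final operator inequality is correct. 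Second, the factorization should match degrees: $A^*\Lambda^{-2s}A$ has order $-2s$ and should be compared to the order $-2s$ commutator expression, not to $\mathcal{A}\in\Psi^{-2s-1}$ itself. Neither point affects the validity of the strategy, which is the paper's strategy, and your remark about why the error term genuinely cannot be removed (the flow points \emph{into} $\Gamma_-$, so there is no forward control) is exactly the structural reason a $\|Bu\|$ term appears in \eqref{eq:sinkest} but not in \eqref{eq:sourcest}.
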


%\medskip
\noindent
{\bf Remark.} A regularization method -- see \cite[Exercise 34]{res} -- 
shows that \eqref{eq:sinkest} is valid for $ u \in \mathcal D' ( X^\circ )$, 
$ \supp u \subset K $ where $ K \Subset X^\circ $ is a fixed set, and for 
which the right hand side of \eqref{eq:sinkest} is finite.

%\medskip 

\begin{proof}[Proof of Proposition \ref{p:rad1}]
The basic idea is to produce an operator $ F_s \in \Psi^{s+\frac12} ( X) $,
elliptic on $ \WF ( A ) $ such that for $ s > s_+ $ and $ u \in \CIc ( X ) $, 
we have
\begin{equation}
\label{eq:Fsu}  \| F_s u \|^2_{H^\frac12}  \leq C \| B_1 P u\|_{H^{s} } \| F_s u \|_{ H^{\frac12}} + C \| B_1 u \|_{ H^{s+ \frac12} }^2 + C \| u\|_{ H^{-N} }^2 . 
\end{equation}
This is achieved by writing, in the notation of \eqref{eq:PQR},
\begin{equation}
\label{eq:negcom} 
\begin{split} & \Im \langle P u , F_s^* F_s u \rangle  = 
\langle {\textstyle \frac i 2 } [ P_0 , F_s^* F_s ] u , u \rangle +
 \Re \langle  Q  u , F_s^* F_s u \rangle ,
\end{split}
\end{equation}
and using the first term on the right hand side to control 
the left hand side of \eqref{eq:Fsu}. We note here that since
$ \WF ( F_s ) \cap \WF ( I - B_1 ) = \emptyset $, then in any 
expression involving $ F_s $ we can replace $ u $ and $ P u $ by
$ B_1 u $ and $ B_1 P u $ respectively by introducing errors
$ \mathcal O ( \| u \|_{ H^{-N} } ) $ for any $ N $. Hence from now
on we will consider estimates with $ u $ only. 

To construct a suitable $ F_s $ 
we take $ \psi_1 \in \CIc ( ( - 2\delta, 2 \delta  ) ; [ 0 , 1] ) $, 
$ \psi_1 ( t ) = 1$, for $ |t| < \delta $, $ t \psi_1'( t) \leq 0 $,
and $ \psi_2 \in \CI ( \RR ) $, $ \psi_2 ( t ) = 0 $ for $ t \leq 1 $, 
$ \psi_2 ( t ) = 1$, $ t \geq 2 $,
and propose
\begin{gather*} 
F_s := \psi_1 ( x_1 ) \psi_1 ( -\Delta_h /{D_{x_1}^2 } ) 
\psi_2 ( D_{x_1} ) D_{x_1}^{s+\frac12} \in \Psi^{s+\frac12} ( X ) ,  \\ 
\sigma ( F_s ) =: f_s ( x, \xi ) = \psi_1 ( x_1 ) \psi_1 
( q(x, \xi' ) / \xi_1^2 ) 
\psi_2 ( \xi_1 ) \xi_1^{s+ \frac12} . 
\end{gather*}
We note that because of the cut-off $ \psi_2 $, $ D_{x_1}^{s+\frac12} $
and $ - \Delta_h/D_{x_1}^2 $ are well defined. 

For $ |\xi|  $ large enough (which implies that $ \xi_1 > |\xi|/C  
$ on the support of
$ f_s $ if $ \delta $ is small enough) we use \eqref{eq:Hpmod} to obtain
\begin{equation}
\label{eq:fHfs} \begin{split}  H_p f_s  ( x, \xi ) & = \xi_1^{s +\frac32} \left( 2 x_1 \psi_1'(  x_1 ) 
\psi_1 ( \xi_2/\xi_1 ) + 2 \psi_1 ( x_1 ) 
( q ( x, \xi') / \xi_1^2 ) \psi_1 '(q (x,\xi') /\xi_1^2 ) \right. \\ & \ \ \ \ \ \ \left. - (s +  
{\textstyle \frac12}) \psi_1 ( x_1 ) \psi_1 ( q ( x, \xi') /\xi_1^2 ) \right) \psi_2 ( \xi_1 )  
 \leq - (s + {\textstyle \frac12} ) \xi_1 f_s . \end{split} \end{equation}
In particular,
\begin{equation}
\label{eq:fsign}  f_s H_p f_s + (s + {\textstyle \frac12} ) \xi_1 f_s^2 \leq 0 , \ \ \ \ 
| \xi | > C_0 . 
\end{equation}
%(We note that applying $ H_p $ to $ x_1^2 $ and $ q ( x, \xi')/ \xi_1^2 $ also
%shows that $ U $ is invariant under the {\em backward flow} of $ H_p $.) 

The inequality \eqref{eq:fsign} 
is important since $ \sigma ( \frac i 2 [ P_0 , F_s^* F_s ] ) 
= f_s H_p f_s $. Hence returning to \eqref{eq:negcom}, using \eqref{eq:fsign}, 
the sharp G{\aa}rding inequality \cite[Theorem 18.1.14]{ho3} 
and the fact that $ F_s^* [ Q , F_s ] \in \Psi^{ 2 s + 1 } ( X ) $, 
we see that
\[ \begin{split}
\Im \langle P u , F_s^* F_s \rangle & = 
\langle {\textstyle \frac i 2 } [ P_0 , F_s^* F_s ] u , u \rangle +
\langle Q F_s u , F_s u \rangle + \langle F_s^*  [ Q , F_s ] u , u \rangle 
\\ & \leq \langle {\textstyle \frac i 2 } [ P_0 , F_s^* F_s ] u , u \rangle +
\langle Q F_s u , F_s u \rangle + C \| u \|_{ H^{s + \frac12} }^2 \\
& \leq \langle ( - (s + {\textstyle \frac12} ) D_{x_1} + Q ) F_s u , F_s u \rangle
+ C \| u \|_{ H^{s+\frac12}}^2 . \end{split} \]
Since $ D_{x_1 } $ is elliptic (and positive) on $ \WF ( F_s ) $ we can use 
\eqref{eq:WFPQ} to see that if $ s > s_+ $ (where $ s_+ $ is given in 
\eqref{eq:s01}) then 
\[  \begin{split} \| F_s u \|_{ H^{\frac12} }^2 & \leq  
- \Im \langle P u, F_s^* F_s u \rangle + C \| u \|_{ H^{s+\frac12} }^2 
%\\ & 
\leq \| P u \|_{H_s} \| F_s^* F_s u \|_{H^{-s}} + C \| u \|_{ H^{s+\frac12 }}^2 \\
& \leq 2 \| P u \|_{H_s }^2 + \textstyle {\frac12} \| F_s u \|_{ H^{\frac12}}^2  + 
C \| u \|_{ H^{s + \frac{1}{2} } } ^2 . \end{split} \]
Recalling the remark made after \eqref{eq:negcom} this gives 
\eqref{eq:Fsu}.  Choosing $ A $ so that $ F_s \in \Psi^{ s + \frac12} $ is elliptic
on $ \WF ( A ) $ we obtain 
\begin{equation}
\label{eq:sourcest1}    \| A u \|_{ H^{s+1} } \leq C \| B_1 P u \|_{ H^s } + C \| B_1 u \|_{ H^{s+\frac12} } 
 C\| u \|_{H^{-N}} .\end{equation}
It remains to eliminate the second term on the right hand side. 
We note that $ \WF ( B_1 ) \cap \Char ( A ) $ forward controlled by 
$ \complement \Char ( A ) $  in the sense of \eqref{eq:control4P}. 
Since \eqref{eq:condp} is satisfied on $ \WF ( B_1 ) \cap \Char ( A )  $ we
apply \eqref{eq:DH} to obtain
\begin{equation}
\label{eq:sourcest2} 
\begin{split}  
\| B_1 u\|_{ H^{s +\frac12 } }  & \leq C 
\| B_2 P u \|_{ H^{s- \frac12 } } + 
C \| A u \|_{ H^{ s+ \frac12 } } + C \| u \|_{H^{-N}}    \\
& \leq C \| B_2 P u \|_{H^{s} } +
\textstyle \frac12  \| A u \|_{ H^{s  }} + C ' \| u \|_{H^{-N}} , 
 \ \ s + \frac12 > - N , 
\end{split}
\end{equation}
where $ B_2 $ has the same propeties as $ B_1 $ but a larger microsupport.
(Here we used an interpolation estimate for Sobolev spaces based on 
$ t^{s + \frac12} \leq \gamma t^{s} + 
\gamma^{-2N-2s -1} t^{-N} $, $ t \geq 0 $ -- that follows from 
rescaling $ \tau^{ s + \frac12} \leq \tau^{s } + \tau^{-N} $, $ \tau \geq 0 $.)

Combining \eqref{eq:sourcest1} and \eqref{eq:sourcest2} gives \eqref{eq:sourcest}
with $ B_1 $ replaced by $ B_2 $. Relabeling the operators concludes the proof.
\end{proof}

\begin{proof}[Proof of Proposition \ref{p:rad2}]
The proof of \eqref{eq:sinkest} is similar to the proof of Proposition 
\ref{p:rad1}.  We now use $ G_s \in \Psi^{ - s -\frac12} ( X ) $ given by the
same formula:
\begin{gather*} G_s := \psi_1 ( x_1 ) \psi_1 ( -\Delta_h /{D_{x_1}^2 } ) 
\psi_2 ( D_{x_1} ) D_{x_1}^{-s-\frac12} \in \Psi^{-s-\frac12} ( X ) ,  \\ 
\sigma ( G_s ) =: g_s ( x, \xi ) = \psi_1 ( x_1 ) \psi_1 
( q(x, \xi' ) / \xi_1^2 ) 
\psi_2 ( \xi_1 ) \xi_1^{-s - \frac12}  .\end{gather*}
However now,
\[ \begin{split} g_s H_g g_s ( x , \xi ) 
& = \xi_1^{-s + \frac12 } g_s ( x, \xi ) \left( 2 x_1 \psi_1'(  x_1 ) 
\psi_1 ( \xi_2/\xi_1 ) + 2 \psi_1 ( x_1 ) 
( q ( x, \xi') / \xi_1^2 ) \psi_1 '(q (x,\xi') /\xi_1^2 ) \right. \\ & \ \ \ \ \ \ \left. - (s +  
{\textstyle \frac12}) \psi_1 ( x_1 ) \psi_1 ( q ( x, \xi') /\xi_1^2 ) \right) \psi_2 ( \xi_1 )  \\
&  \leq - (s + {\textstyle \frac12} ) |\xi_1| g_s^2 + C_0 |\xi_1|^{-2s} b ( x, \xi )^2  , \end{split} \]
 where $ b = \sigma ( B ) $ is chosen to control the terms involving
 $ t \psi_1' ( t ) $ 
 (which now have the ``wrong" sign compared to \eqref{eq:fHfs}).
 The proof now proceeds in the 
 same way as the proof of \eqref{eq:sourcest} but we have to carry over the
 $ \| B u \|_{ H^s} $ terms. 
 \end{proof}

\section{Proof of Theorem 1}
\label{s:t1}

We first show that $ \ker_{ \mathscr X_s } P ( \lambda ) $ is finite
dimensional when $ \Im \lambda > - s - \frac12 $. Using standard arguments
this follows from the definition \eqref{eq:hypXY} and the estimate
\eqref{eq:estker} below. To formulate it 
suppose that 
\[  \chi \in \CIc ( X ) , \ \ \chi|_{ x_1 < - 2\delta} \equiv 0 , \ 
\ \chi |_{ x_1 > - \delta } \equiv 1 , \]
where $ \delta > 0$ is a fixed (small) constant. Then for $ u \in \mathscr X_s $
and $ s > - \Im \lambda - \frac12 $,
\begin{equation}
\label{eq:estker}  \| u \|_{ \bar H^{s+1} ( X^\circ ) } \leq 
C \| P (\lambda ) u \|_{ \bar H^s ( X^\circ ) } + \| \chi u \|_{ H^{-N} ( X) } .
\end{equation}

\begin{proof}[Proof of \eqref{eq:estker}]
If $ \chi_+ \in \CIc $, $ \supp \chi_+ \subset \{ x_1 > 0 \} $
then elliptic estimates show that
\[  \| \chi_+ u \|_{ H^{s+1 } } \leq \| \chi_+ u \|_{ H^{s+2} } \leq 
C \| P u \|_{ H^s } + C \| \chi u \|_{ H^{-N} } . \]
Near $ x_1 = 0 $ we use the estimates 
\eqref{eq:sourcest} (valid for $ u \in \mathscr X_s $) -- see Remark 2 after 
Proposition \ref{p:rad1}) which give for, for $ \chi_0 \in \CIc  $, 
$ \supp \chi_0 \subset \{ |x_1| < \delta /2 \} $
\begin{equation}
\label{eq:Pchi0}   \| \chi_0 u \|_{H^{s +1} ( X )} \leq C \| P ( \lambda )  u \|_{ \bar H^s ( X) } + 
 , \| \chi u \|_{ H^{-N} ( X ) } . \end{equation}
To prove \eqref{eq:Pchi0} we microlocalize to neighbourhoods of $ \{ \pm \xi_1 > |\xi|/C \}$ and use \eqref{eq:sourcest} for $ P ( \lambda ) $ and $ - P ( \lambda ) $ respectively -- 
from \eqref{eq:Plag} we see that $ s_+ = - \Im \lambda - \frac12 $ for $ P= P ( \lambda ) $ and $ s_- = - \Im \lambda - \frac12 $ for $P =  - P ( \lambda ) $ (a rescaling 
by a factor of $ 4 $ is needed by comparing \eqref{eq:Plag} with \eqref{eq:modred}).
Elsewhere the operator is elliptic in $ |x_1|< \delta $. 

Finally if $ \chi_- $ is supported in $ \{ x_1 < -\delta/2 \} $ then 
the hyperbolic estimate \eqref{eq:hypest1} shows that
\[ \| \chi_-  u \|_{ \bar H^{s+1} ( X ) } \leq 
C \| P ( \lambda ) u \|_{ \bar H^s ( X) } + C \| \chi_0 u \|_{ H^{s+1} ( X) }. \]
Putting these estimates together gives \eqref{eq:estker}. \end{proof}

To show that the range of $ P $ on $ \mathscr X_s $ is of finite
codimension and is closed we need the following 

\begin{lem}
\label{l:kerP}
The cokernel of $ P ( \lambda ) $ in $ \dot H^{-s} ( X ) \simeq \mathscr Y_s ^* $
(see \eqref{eq:duality}) 
\[  \coker_{ \mathcal X_s } P ( \lambda ) := \{ v \in \dot H^{-s} ( X ) : \forall 
\, u \in \mathscr X_s,   \ \langle 
P ( \lambda ) u , v \rangle = 0 \} , \]
is equal to the kernel of $ P ( \bar \lambda ) $ on $ \dot H^{-s} ( X )$:
%\begin{equation}
%\label{eq:kerP}
$\coker_{ \mathcal X_s } P ( \lambda ) =  \ker_{ \dot H^{-s} ( X ) } P ( \bar \lambda)$ .
%\end{equation}
\end{lem}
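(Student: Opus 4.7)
The plan is to prove both inclusions using the formal adjoint identity $P(\lambda)^* = P(\bar\lambda)$ from \eqref{eq:formaladj} together with the duality \eqref{eq:duality} between $\bar H^s(X^\circ)$ and $\dot H^{-s}(X)$.

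For the inclusion $\ker_{\dot H^{-s}(X)} P(\bar\lambda) \subset \coker_{\mathscr X_s} P(\lambda)$, fix $v \in \dot H^{-s}(X)$ with $P(\bar\lambda) v = 0$ (as an element of $\dot H^{-s-2}(X)$) and $u \in \mathscr X_s$. The strategy is to approximate $u$ in the graph norm of $\mathscr X_s$ by a sequence $u_k \in \bCI(X)$; that is, $u_k \to u$ in $\bar H^{s+1}(X^\circ)$ with $P(\lambda) u_k \to P(\lambda) u$ in $\bar H^s(X^\circ)$. Such approximants are obtained by extending $u$ across $\partial X$ to a slight enlargement $\tilde X$ and applying a Friedrichs mollifier; the required commutator estimate $[P(\lambda),\text{mollifier}] \to 0$ is the standard Friedrichs lemma (cf.\ \cite[Appendix E]{res}). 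Extending $v$ by zero to a compactly supported distribution on $\tilde X$, the formal adjoint identity then yields
\[
\langle P(\lambda) u_k, v\rangle = \langle u_k, P(\bar\lambda) v\rangle = 0,
\]
where both pairings are the distributional duality on $\tilde X$ and reduce to the $\bar H \times \dot H$ pairings on $X$ because $v$ is supported in $\bar X$. Passing to the limit via continuity of the $\bar H^s(X^\circ) \times \dot H^{-s}(X)$ pairing gives $\langle P(\lambda) u, v\rangle = 0$, as required.

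For the reverse inclusion $\coker_{\mathscr X_s} P(\lambda) \subset \ker_{\dot H^{-s}(X)} P(\bar\lambda)$, suppose $v \in \dot H^{-s}(X)$ annihilates $P(\lambda)\mathscr X_s$. Since $\bCI(X) \subset \mathscr X_s$, the same integration-by-parts identity gives $\langle u, P(\bar\lambda) v\rangle = 0$ for every $u \in \bCI(X)$. The density of $\bCI(X)$ in $\bar H^{s+2}(X^\circ)$ combined with the duality \eqref{eq:duality} (applied at level $s+2$) then forces $P(\bar\lambda) v = 0$ in $\dot H^{-s-2}(X)$, so $v \in \ker_{\dot H^{-s}(X)} P(\bar\lambda)$.

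The main technical point, and the only real obstacle, is the \emph{simultaneous} approximation needed in the first inclusion: producing $u_k \in \bCI(X)$ with $u_k \to u$ in $\bar H^{s+1}(X^\circ)$ \emph{and} $P(\lambda) u_k \to P(\lambda) u$ in $\bar H^s(X^\circ)$. This is handled by Friedrichs mollification on the ambient manifold, exploiting the smoothness of the coefficients of $P(\lambda)$ up to and beyond $\partial X$; once granted, the rest of the argument reduces to formal manipulations with the adjoint identity and the extendable/supported distribution duality.
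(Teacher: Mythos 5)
Your proof is correct and takes essentially the same route as the paper's. The paper dispatches this lemma in one paragraph by invoking the formal adjoint identity \eqref{eq:formaladj} together with density of $\bCI(X^\circ)$ in $\mathscr X_s$ with respect to the graph norm (citing \cite[Lemma E.42]{res}); your mollification argument simply unpacks the mechanism behind that density claim, and your split into two inclusions makes explicit what the paper leaves implicit.
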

\begin{proof} 
In view of \eqref{eq:formaladj} 
we have, for $ u \in \bCI ( X^\circ ) $ and $ v \in \dot H^{-s}  ( X ) $, 
\[ \langle P ( \lambda ) u , v \rangle = \langle u , P ( \bar \lambda ) v \rangle .\]
Since $ \bCI ( X^\circ ) $ is dense in $ \mathscr X_s $ (see for instance 
Lemma \cite[Lemma E.42]{res}) it follows that 
$ \langle P ( \lambda ) u , v \rangle = 0 $ for all $ u \in \mathscr X_s $ 
if and only if $ P ( \bar \lambda ) v = 0 $. 
\end{proof} 

Hence to show that  $ \coker_{ \mathscr X_s} $ is finite dimensional 
it suffices to prove that the kernel of $ P ( \bar \lambda ) $ is finite
dimensional. We claim an estimate from which this follows:
\begin{equation}
\label{eq:estcok}   u \in \ker_{ \dot H^{-s} ( X ) } \ \Longrightarrow \ \| u \|_{ \dot H^{-s}  ( X)  } \leq C \| \chi u \|_{ H^{-N} ( X ) } , \ \ 
s > - \Im \lambda - \textstyle{\frac12} \end{equation}
where $ \chi $ is the same as in \eqref{eq:estker}.

\begin{proof}[Proof of \eqref{eq:estcok}]
The hyperbolic estimate \eqref{eq:hypePest} shows that if
$ P (\bar \lambda ) u = 0 $ 
for $ u \in \dot H^{-s} ( X ) $  
(with any $ \lambda \in \CC $ or $ s \in \RR$) 
then  $ u |_{ x_1 < 0 } \equiv 0 $.
We can now apply \eqref{eq:sinkest} with $ P = P ( \lambda ) $ 
near $ \Gamma_- $ and $ P = - P ( \lambda ) $ near $ \Gamma_+ $. 
We again see that the threshold condition is the same at both places:
we require that $ s > - \Im \lambda - \frac12 $. Since $ u $ vanishes
in $ x_1 < 0 $ there $ \WF ( B u ) \cap \Char P ( \lambda ) = \emptyset $ 
and hence (using \eqref{eq:WFPQ}) $ \| B u \|_{ \dot H^{-s} ( X ) } \leq
C \| \chi u \|_{-N} $. Hence \eqref{eq:sinkest} and elliptic estimates
give \eqref{eq:estcok}.
\end{proof}

\section{Asymptotic expansions} 
\label{asym} 

To prove Theorem \ref{t:3} we need a regularity result for $ L^2 $ solutions
of 
\begin{equation}
\label{eq:L2R}  ( - \Delta_g - \lambda^2 - ({\textstyle{ \frac n 2 }})^2)^{-1} u = 
f \in \CIc ( M ) , \ \
\Im \lambda > {\textstyle{\frac n 2 }} . \end{equation}
To formulate it we recall the definition of 
$ X $ given in \eqref{eq:coordX} and of $ X_1 := X \cap \{ x_1 > 0 \}$. 
We also define $ j: M \to X_1 $ to be the natural identification, given by 
$ j ( y_1 , y' ) = (y_1^2 , y') $ near the boundary. Then we have 

\begin{prop}
\label{p:propu}
For $ \Im \lambda \gg 1 $ and $ \lambda \notin i \NN $, the unique $ L^2$-solution 
$ u $ to \eqref{eq:L2R} satisfies
\begin{equation}
\label{eq:propu}
u = y_1^{-i \lambda + \frac n 2} j^* U ,  \ \ U \in \bCI ( X_1 ) . 
\end{equation}
In other words, near the boundary, $ u ( y ) = y_1^{ - i \lambda + \frac n2} 
U ( y_1^2, y') $ where $ U $ is smoothly extendible.
\end{prop}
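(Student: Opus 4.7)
The strategy is to construct a candidate solution of the form $y_1^\zeta j^* U$, with $\zeta := n/2 - i\lambda$ and $U \in \bCI(X_1)$, by solving a problem for $P(\lambda)$ on the extended manifold $X$ via the Fredholm theory of Theorem \ref{t:2}, and then to identify this candidate with $u$ via uniqueness of the $L^2$-resolvent.

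First, since $f \in \CIc(M)$ has support away from $\partial M$, the conjugation formula \eqref{eq:firstconj} shows that $\tilde g := j_*(y_1^{-\zeta} f / x_1)$, extended by zero across $\{x_1 \leq 0\}$, belongs to $\bCI(X)$ and vanishes in a neighborhood of $\{x_1 = 0\}$. I would then use Theorem \ref{t:2} to solve $P(\lambda) V = \tilde g$ for $V \in \mathscr X_s$ with some $s > -\Im\lambda - \tfrac12$. Surjectivity requires the cokernel, which by Lemma \ref{l:kerP} equals $\ker_{\dot H^{-s}(X)} P(\bar\lambda)$, to be trivial: for any $v$ in this kernel, the radial estimates for $P(\bar\lambda)$ at $\Gamma_\pm$ (whose source/sink roles are reversed from $P(\lambda)$ because $\sigma(Q)$ flips sign under $\lambda \mapsto \bar\lambda$) combined with interior ellipticity and the hyperbolic estimate \eqref{eq:hypePest} upgrade $v$ to $\dCI$ on $X \cap \{x_1 \leq 0\}$; Lemma \ref{l:ahe} applied to $P(\bar\lambda)$ then forces $v \equiv 0$ there. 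On $\{x_1 > 0\}$, the function $y_1^{\bar\zeta}(v \circ j)$ solves the homogeneous resolvent equation on $M$ at spectral parameter $\bar\lambda^2 + (n/2)^2 \notin \Spec(-\Delta_g)$ for $\Im\lambda \gg 1$ (the imaginary part is $-2(\Re\lambda)(\Im\lambda)$, nonzero except when $\Re\lambda = 0$, in which case the real part is $(n/2)^2 - (\Im\lambda)^2 < 0$), forcing $v \equiv 0$.

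Once $V \in \mathscr X_s$ with $P(\lambda) V = \tilde g \in \bCI(X)$ exists, its regularity is upgraded to $V \in \bCI(X)$ via the implication \eqref{eq:PtoC}: the threshold $s_+ + 1 = -\Im\lambda + \tfrac12$ is easily cleared for $\Im\lambda \gg 1$ by the Sobolev regularity $s+1$ built into $\mathscr X_s$. Setting $U := V|_{X_1} \in \bCI(X_1)$, one computes
\[
 (-\Delta_g - \zeta(n-\zeta))\bigl(y_1^\zeta j^* U\bigr) = y_1^\zeta \cdot x_1 P(\lambda) V = y_1^\zeta \cdot x_1 \tilde g = f,
\]
and $y_1^\zeta j^* U \in L^2(d\vol_g)$ because $U$ is smooth (hence bounded) near $\partial M$ while $\Re\zeta = n/2 + \Im\lambda > n/2$. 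Uniqueness of the $L^2$-resolvent then gives $u = y_1^\zeta j^* U$, which is \eqref{eq:propu}.

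The main obstacle I anticipate is the cokernel triviality step. The radial thresholds for $P(\bar\lambda)$ flip to $s_\pm = \Im\lambda - \tfrac12$, which is large and positive for $\Im\lambda \gg 1$, so the radial estimates for $P(\bar\lambda)$ give regularity information in a delicate way, and the bootstrap from $\dot H^{-s}$ (with $-s < \Im\lambda + \tfrac12$) up to $\dCI$ on $\{x_1 \leq 0\}$ must be orchestrated carefully. The exclusion $\lambda \notin i\NN$ serves both to guarantee $\lambda^2 + (n/2)^2 \notin \Spec(-\Delta_g)$ (needed for uniqueness of the $L^2$-resolvent, and also for the cokernel argument applied to $\bar\lambda$) and to avoid resonances $\zeta + 2k = n-\zeta$ in the formal indicial expansion at $\{x_1=0\}$, which would otherwise introduce logarithmic terms and obstruct smoothness of $U$ in $(x_1, x')$.
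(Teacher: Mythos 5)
Your proposal takes a genuinely different route from the paper, but it has a gap that makes the argument circular as written.

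The paper's proof of Proposition~\ref{p:propu} is entirely self-contained and elementary: Lemmas~\ref{l:0} and~\ref{l:01} give (weighted) $L^2_g$ mapping properties for $Q(\lambda^2)^{-1}$; iterating the commutator argument yields the conormal regularity \eqref{eq:Qlambda}; and the Mellin transform turns $P(\lambda)u = f$ near $\{x_1=0\}$ into a recursion giving a full asymptotic expansion, with the indicial terms $x_1^{i\lambda}b_k(x')$ then excluded by the $L^2_g$ condition. Nothing from \S\ref{s:t1} or \S\ref{merc} is invoked. That self-containment is essential, because the output of Proposition~\ref{p:propu} -- the mapping property \eqref{eq:P0las} -- is precisely the ingredient used in \S\ref{merc} to prove triviality of $\ker_{\dot H^{-s}}P(\bar\lambda)$ and hence surjectivity of $P(\lambda)$.

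Your argument reverses this dependency: to solve $P(\lambda)V=\tilde g$ you need surjectivity of $P(\lambda):\mathscr X_s\to\mathscr Y_s$, i.e.\ triviality of $\coker P(\lambda)=\ker_{\dot H^{-s}}P(\bar\lambda)$, which is the content of Theorem~\ref{t:3}. You rightly avoid citing Theorem~\ref{t:3} and attempt a direct argument, but that argument fails at the crucial step on $\{x_1>0\}$. You assert that $y_1^{\bar\zeta}(v\circ j)$ solving the homogeneous resolvent equation at a non-spectral parameter forces $v\equiv 0$, but injectivity of $-\Delta_g - z$ for $z\notin\Spec(-\Delta_g)$ only holds on $L^2_g$, and $y_1^{\bar\zeta}(v\circ j)$ has no reason to be in $L^2_g$: since $\Re\bar\zeta = \tfrac n2 - \Im\lambda < \tfrac n2$, even a \emph{bounded} $v$ on $X_1$ produces a weight $y_1^{\bar\zeta}$ that is too large near $\partial M$ for $L^2(d\vol_g)$. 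The nontrivial elements of $\ker_{\dot H^{-s}}P(\bar\lambda)$ are exactly the (co-)resonant states -- non-$L^2$ solutions of the homogeneous equation -- so a non-spectral condition alone cannot exclude them. The paper circumvents this by \emph{pairing} such a $v$ against $P(\lambda)u$ with $u$ supplied by \eqref{eq:P0las}, which is why \eqref{eq:P0las} (hence Proposition~\ref{p:propu}) must be proved first. The remark after the proof of Theorem~\ref{t:3} notes that the alternative is a genuinely new semiclassical estimate, which this paper deliberately does not develop.

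Two smaller inaccuracies: the source/sink roles at $\Gamma_\pm$ are governed by the real principal symbol $\sigma(P_0)$ and do \emph{not} swap under $\lambda\mapsto\bar\lambda$; only the thresholds $s_\pm$ (which depend on $\sigma(Q)$) change. Also, $v|_{\{x_1<0\}}\equiv 0$ follows directly from the support condition in $\dot H^{-s}(X)$ and the hyperbolic estimate \eqref{eq:hypePest}; the detour through radial estimates and Lemma~\ref{l:ahe} is unnecessary and does not quite apply ($v$ has support in $\{x_1\geq 0\}$, not in $\{x_1\leq 0\}$, once the hyperbolic step is done).
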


%\medskip
\noindent
{\bf Remark.} Once Theorem \ref{t:3} is established 
then the relation between $ P ( \lambda )^{-1} $ and the meromorphically 
continued resolvent $ R ( \frac n 2 - i \lambda ) $ shows that 
$ y_1^{ - s } R ( s ) : \dCI ( M ) \to  j^*\bCI ( X_1  ) $ is meromorphic
away from $ s \in \NN $ -- see \S \ref{merc}. That means that away from 
exceptional points \eqref{eq:propu} remains valid for $ u = R ( \frac n 2 - i \lambda ) $. 

To give a direct proof of Proposition \ref{p:propu} we need a few lemmas.
For that we define Sobolev spaces $ H^k_g ( M , d \vol_g ) $ associated to the Laplacian $ - \Delta_g $:
\begin{equation}
\label{eq:Hk}
 H^k_g ( M ) := 
\{ u :  y_1^{|\alpha|} D_{y}^\alpha u  \in 
L^2 ( M , d \! \vol_g ), \  |\alpha| \leq k \}, \ \  \ell \in \NN . 
\end{equation}
(In invariant formulation can 
be obtained by taking vector fields vanishing at $ \partial M $ -- see \cite{mm}.)
Let us also put
\begin{equation}
\label{eq:Qofla}
Q ( \lambda^2) := 
 - \Delta_g - \lambda^2 - ({\textstyle{ \frac n 2 }})^2 .
\end{equation}

\begin{lem}
\label{l:0}
With $ H_g^k ( M ) $ defined by \eqref{eq:Hk} and $ 
Q( \lambda^2 ) $ by \eqref{eq:Qofla} we have for any $ k \geq 0 $, 
\begin{equation}
\label{eq:0}
Q( \lambda^2)^{-1} :
H_g^k ( M ) \to H_g^{k+2} ( M ) , \ \  \Im \lambda > \textstyle{\frac n 2 } . 
\end{equation}
\end{lem}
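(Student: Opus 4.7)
The plan is to proceed by induction on $k$.

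For the base case $k = 0$, I will note that $H_g^0(M) = L^2(M, d\vol_g)$ and that setting $\zeta = \frac{n}{2} - i \lambda$ turns $\Im \lambda > \frac{n}{2}$ into $\Re \zeta > n$, with $\zeta(n - \zeta) = \lambda^2 + (n/2)^2$, so that $Q(\lambda^2)^{-1} = R(\zeta) : L^2(M, d\vol_g) \to H^2(M, d\vol_g)$ comes directly from \eqref{eq:Res4Deltag}. Since $y_1$ is bounded on $M$, we have $\|y_1^{|\alpha|} D_y^\alpha u\|_{L^2} \leq C \|D_y^\alpha u\|_{L^2}$ for $|\alpha| \leq 2$, giving the embedding $H^2(M, d\vol_g) \hookrightarrow H_g^2(M)$, which handles $k = 0$.

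For the inductive step the key observation I will exploit is that, by \eqref{eq:Deltagg}, the operator $-\Delta_g$ is a polynomial of degree at most two in the ``0-vector fields'' $V \in \{y_1 D_{y_1}, y_1 D_{y'_j}\}_j$ with smooth coefficients. A standard consequence of this is the equivalent characterization
\[ H_g^k(M) = \{ u \in L^2 : V_{j_1} \cdots V_{j_\ell} u \in L^2(M, d\vol_g) \ \text{for all 0-vector fields } V_{j_i} \text{ and } \ell \leq k \}, \]
together with the fact that for any 0-vector field $V$ the commutator $[V, -\Delta_g]$ is again a 0-differential operator of order at most two, hence bounded $H_g^{m+2} \to H_g^m$ for every $m \geq 0$.

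Assuming the conclusion for some $k \geq 0$, I will take $f \in H_g^{k+1} \subset H_g^k$ and set $u := Q(\lambda^2)^{-1} f \in H_g^{k+2}$. For each 0-vector field $V$, applying $V$ to $Q(\lambda^2) u = f$ yields
\[ Q(\lambda^2)(Vu) = Vf + [V, Q(\lambda^2)] u, \]
where $Vf \in H_g^k$ since $V : H_g^{k+1} \to H_g^k$, and $[V, Q(\lambda^2)] u = [V, -\Delta_g] u \in H_g^k$ since $u \in H_g^{k+2}$. As $Vu \in L^2$, uniqueness of the $L^2$ inverse combined with the inductive hypothesis yields $Vu = Q(\lambda^2)^{-1}\bigl(Vf + [V, -\Delta_g] u\bigr) \in H_g^{k+2}$. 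Iterating over all 0-vector fields and invoking the equivalent characterization above then gives $u \in H_g^{k+3}$, completing the induction.

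The main subtlety will be justifying the equivalence of the two characterizations of $H_g^k$ and verifying that commutators of 0-vector fields with $-\Delta_g$ remain 0-operators of the expected order; both are standard consequences of the Mazzeo--Melrose 0-calculus that follow directly from the explicit form \eqref{eq:Deltagg}, but should be stated carefully so the induction goes through cleanly.
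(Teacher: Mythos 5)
Your inductive step is essentially the paper's: commute a 0-vector field $V$ with $Q(\lambda^2)$, observe that the commutator is a 0-differential operator of order $\leq 2$, and bootstrap using the $L^2$-inverse; this is fine.

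The gap is in the base case. The space $H^2(M, d\!\vol_g)$ in \eqref{eq:Res4Deltag} is the domain of the self-adjoint operator $-\Delta_g$, i.e.\ $\{u \in L^2_g : \Delta_g u \in L^2_g\}$ with the graph norm. Your argument "since $y_1$ is bounded, $\|y_1^{|\alpha|}D_y^\alpha u\|_{L^2} \leq C\|D_y^\alpha u\|_{L^2}$, giving $H^2 \hookrightarrow H_g^2$" silently substitutes the much stronger space $\{u : D_y^\alpha u \in L^2_g,\ |\alpha|\leq 2\}$ for $H^2(M,d\!\vol_g)$ — and the resolvent does \emph{not} map into that stronger space. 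Indeed, by Proposition~\ref{p:propu} $u = R(\zeta)f$ behaves like $y_1^\zeta$ near $\partial M$, so $D_{y_1}^2 u \sim y_1^{\zeta-2}$ and
\[
\int_0^1 |y_1^{\zeta-2}|^2\, y_1^{-n-1}\,dy_1 < \infty \ \Longleftrightarrow\ \Re\zeta > \tfrac{n}{2}+2,
\]
which fails for $n < \Re\zeta < \tfrac{n}{2}+2$ (e.g.\ any $n \leq 4$). So the inclusion you invoke runs the wrong way for the operator domain. The genuine content of the $k=0$ case is precisely that the operator domain coincides with $H_g^2(M)$, and establishing that requires an a priori estimate: the paper obtains it by pairing $Q(\lambda^2)u$ with $u$ (integration by parts yields the $H_g^1$ bound via $\langle Q(\lambda^2)u,u\rangle_{L^2_g}$ controlling $\int_M(|y_1D_{y_1}u|^2 + y_1^2|du|_h^2)\,d\!\vol_g$), and then expanding $\|Q(\lambda^2)u\|_{L^2_g}^2$ to get the $H_g^2$ bound, followed by density of $\CIc(M)$. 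Without this (or an equivalent elliptic estimate in the 0-calculus), the base case does not follow from \eqref{eq:Res4Deltag} alone.
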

\begin{proof}
Using the notation from the proof of \eqref{eq:Deltagg} and
 Lemma \ref{l:ahe} we write
\[ Q ( \lambda^2 ) = 
( y_1 D_{y_1} )^2 + y_1^2 d_h^* d- i ( n  + y_1^2 \gamma ( y_1^2, y') ) y_1
D_{y_1} \]
so that for $ u \in \CIc ( M ) $ supported near $ \partial M $, 
and with the inner products in 
$ L^2_g =  L^2 (  M , d \vol_g ) $,
\[ \langle Q ( \lambda^2 ) u , u \rangle_{L^2_g}  = 
\int_M ( | y_1 D_{y_1}|^2 + y_1^2 | d u|_h ^2 ) d \vol_g . \]
Hence,
$  \| u \|_{ H^1_g } \leq C \| Q ( \lambda^2 ) u \|_{ L^2_g } + C \| u \|_{L^2_g } $.
 Using this and expanding  $ \langle Q ( \lambda ) u , Q ( \lambda ) u \rangle_{L^2_g} 
$ we see that 
\[ \| u \|_{ H^2_g } \leq C \| Q ( \lambda^2 ) u \|_{L^2_g } + C \| u \|_{L^2_g } , \ \ 
u \in \CIc ( M ) . \]
Since $ \CIc ( M ) $ is dense in $ H^2_g ( M ) $ it follows that for 
$ \Im \lambda > \frac n 2 $, $ Q ( \lambda)^2 : L^2_g \to H_g^2 $. 
Commuting  $ y_1 V $, where $ V \in \bCI ( M ; TM ) $, with $ Q ( \lambda^2 )$ 
gives the general estimate, 
\[ \| u \|_{ H^{k+2}_g } \leq C \| Q ( \lambda^2 ) u \|_{H^k_g } + C \| u \|_{L^2_g } , \ \ u \in \CIc ( M) ,
\]
and that gives \eqref{eq:0}.
\end{proof}

\begin{lem}
\label{l:01}
For any $ \alpha > 0 $ there exists $ c ( \alpha ) > 0 $ such that
for $ \Im \lambda > c ( \alpha ) $, 
\begin{equation}
\label{eq:01}
 y_1^{\alpha } Q ( \lambda^2 )^{-1}  y_1^{-\alpha}  : L^2_g  ( M ) \to H^2_g ( M ) . 
\end{equation}
\end{lem}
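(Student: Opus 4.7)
The plan is to conjugate $Q(\lambda^2)$ by $y_1^{-\alpha}$ and invert the resulting operator by a Neumann series, which works once we have a sharp enough operator bound $\|Q(\lambda^2)^{-1}\|_{L^2_g \to H^1_g} = O(1/\Im\lambda)$ for $\Im\lambda$ large. Extending $y_1$ to a smooth positive function on $M$ and using \eqref{eq:Deltagg} together with the identity $y_1^\alpha (y_1 D_{y_1}) y_1^{-\alpha} = y_1 D_{y_1} + i\alpha$, a direct computation gives
\[ \tilde Q_\alpha := y_1^\alpha Q(\lambda^2) y_1^{-\alpha} = Q(\lambda^2) + S_\alpha, \qquad S_\alpha = 2i\alpha\, y_1 D_{y_1} - \alpha(\alpha+n) - \alpha\, y_1^2 \gamma(y_1^2, y'), \]
near $\partial M$, with an analogous bounded first-order modification coming from the interior extension of $y_1$. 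Since $y_1 D_{y_1}$ is one of the vector fields defining $H^1_g$ in \eqref{eq:Hk}, the perturbation $S_\alpha$ maps $H^1_g(M) \to L^2_g(M)$ with operator norm $\le C(1+\alpha^2)$.

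The main analytic input is the quantitative bound
\begin{equation*}
\|Q(\lambda^2)^{-1}\|_{L^2_g(M) \to H^1_g(M)} \le C/\Im\lambda, \qquad \Im\lambda \ge \Lambda_0,
\end{equation*}
for some absolute $\Lambda_0 > n/2$. Writing $\lambda = a + ib$ and splitting the energy identity from the proof of Lemma \ref{l:0} into real and imaginary parts,
\[ \Re\langle Q(\lambda^2) u, u\rangle_{L^2_g} = \langle -\Delta_g u, u\rangle_{L^2_g} + (b^2 - a^2 - \tfrac{n^2}{4})\|u\|^2_{L^2_g}, \qquad \Im\langle Q(\lambda^2) u, u\rangle_{L^2_g} = -2ab\,\|u\|^2_{L^2_g}, \]
one sees that the real part provides a coercive gap $\gtrsim b^2\|u\|^2_{L^2_g}$ when $|a|\le b/\sqrt 2$, while the absolute value of the imaginary part is $\gtrsim b^2\|u\|^2_{L^2_g}$ when $|a|>b/\sqrt 2$. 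In either regime Cauchy--Schwarz yields $\|u\|_{L^2_g} \le C\|Q(\lambda^2)u\|_{L^2_g}/b^2$, and substituting this back into the real-part identity (together with $\|u\|^2_{H^1_g} \lesssim \langle -\Delta_g u,u\rangle_{L^2_g} + \|u\|^2_{L^2_g}$, as in Lemma \ref{l:0}) delivers the claimed $H^1_g$ bound.

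Given these two ingredients, fix $\alpha > 0$ and choose $c(\alpha)$ large enough that $C(1+\alpha^2)\cdot C/c(\alpha) < \tfrac12$. For $\Im\lambda > c(\alpha)$ the operator $I + S_\alpha Q(\lambda^2)^{-1}$ is invertible on $L^2_g(M)$ by Neumann series. The factorization $\tilde Q_\alpha = (I + S_\alpha Q(\lambda^2)^{-1})Q(\lambda^2)$ then exhibits $\tilde Q_\alpha : H^2_g(M) \to L^2_g(M)$ as invertible, with inverse
\[ \tilde Q_\alpha^{-1} = Q(\lambda^2)^{-1}(I + S_\alpha Q(\lambda^2)^{-1})^{-1} : L^2_g(M) \to H^2_g(M), \]
the last mapping property using Lemma \ref{l:0}. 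The conjugation identity identifies $\tilde Q_\alpha^{-1}$ with $y_1^\alpha Q(\lambda^2)^{-1} y_1^{-\alpha}$, yielding \eqref{eq:01}.

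The principal difficulty lies in the coercivity estimate: the straightforward $L^2_g \to L^2_g$ bound on $Q(\lambda^2)^{-1}$ decays like $1/(\Im\lambda)^2$, but to absorb the first-order term $y_1 D_{y_1}$ inside $S_\alpha$ we need to trade one derivative against exactly one power of $\Im\lambda$. This forces one to separate the regimes $|\Re\lambda| \lesssim \Im\lambda$ and $|\Re\lambda| \gtrsim \Im\lambda$ and verify that the energy identity gives the sharp $1/\Im\lambda$ decay uniformly in $\Re\lambda$. The conjugation computation and the Neumann-series step are routine once this is in hand.
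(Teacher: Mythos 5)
Your overall strategy---conjugate by $y_1^{-\alpha}$, factor out $Q^{-1}$, and invert by Neumann series---is the same as the paper's. The paper groups the constant $-\alpha^2$ arising from the conjugation into a shift of the spectral parameter, writing $y_1^\alpha Q(\lambda^2)y_1^{-\alpha} = Q(\lambda^2+\alpha^2) - \alpha(\text{first order})$, and then quotes the spectral-theorem bound \eqref{eq:Qlambdaest} (with interpolation to $H^1_g$) to control the Neumann perturbation, whereas you keep $Q(\lambda^2)$ and derive the needed $L^2_g \to H^1_g$ resolvent bound from scratch via the energy identity. Deriving that bound directly is a perfectly reasonable alternative to citing \eqref{eq:Qlambdaest}, and the decomposition into the regimes $|\Re\lambda| \lessgtr \Im\lambda$ is the right idea.

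However, there is a genuine gap in how you close the $H^1_g$ estimate in the regime $|\Re\lambda| > \Im\lambda/\sqrt2$. You record only the weaker $L^2$ bound $\|u\|_{L^2_g} \le C\|Q(\lambda^2)u\|_{L^2_g}/b^2$ and then claim that ``substituting this back into the real-part identity delivers the claimed $H^1_g$ bound.'' But in that regime the real-part identity reads
\[
\langle -\Delta_g u, u\rangle_{L^2_g} = \Re\langle Q(\lambda^2)u,u\rangle_{L^2_g} + \bigl(a^2 + \tfrac{n^2}4 - b^2\bigr)\|u\|^2_{L^2_g},
\]
and when $|a|\gg b$ the coefficient of $\|u\|^2_{L^2_g}$ is comparable to $a^2$; plugging in $\|u\|^2_{L^2_g} \lesssim \|Qu\|^2_{L^2_g}/b^4$ gives a term of size $a^2\|Qu\|^2_{L^2_g}/b^4$, which is unbounded in $a$ and does \emph{not} yield $\|u\|^2_{H^1_g} \lesssim \|Qu\|^2_{L^2_g}/b^2$. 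The fix is to use the sharper $L^2$ bound that the imaginary-part identity actually gives in this regime, namely $\|u\|_{L^2_g} \le \|Qu\|_{L^2_g}/(2|a|b)$; then $a^2\|u\|^2_{L^2_g} \le \|Qu\|^2_{L^2_g}/(4b^2)$ and similarly the cross term $\|Qu\|_{L^2_g}\|u\|_{L^2_g}$ is $\lesssim \|Qu\|^2_{L^2_g}/b^2$ since $|a|\gtrsim b$, which closes the argument. You gesture at this in your closing paragraph, but as written the middle paragraph's deduction would fail. (The sign discrepancies in your formula for $S_\alpha$ are harmless, since only the magnitude of its coefficients matters for the $H^1_g\to L^2_g$ bound.)
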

\begin{proof}
We expand the conjugated operator as follows:
\begin{equation}
\label{eq:QlaK} \begin{split}  y_1^{\alpha} Q ( \lambda^2 ) y_1^{-\alpha} & =
Q ( \lambda^2 + \alpha^2 ) - \alpha ( 2 i y_1 D_{y_1 } - n -  y_1^2 \gamma( y_1^2 , y') ) \\
& = \left( I + K ( \lambda, \alpha )  \right)^{-1} Q ( \lambda^2 + \alpha^2) ,\\
K ( \lambda, \alpha ) & := \alpha ( 2 i y_1 D_{y_1 } - n - y_1^2 \gamma ( y_1^2 , y') )
Q ( \lambda^2 + \alpha^2 )^{-1} 
. \end{split} \end{equation}
The inverse $ Q ( \lambda^2 + \alpha^2 ) $ exists due to the following
bound provided by 
the spectral theorem (since $ \Spec ( - \Delta_g ) \subset [ 0 , \infty ) $) 
and \eqref{eq:0} (with $ k =0 $):
\begin{equation}
\label{eq:Qlambdaest}
\| Q ( \mu^2 )^{-1}  \|_{ L^2_g \to H^k_g } 
 \leq \frac{( 1 + C |\mu|)^{k/2}}
{d(\mu^2 , [- ({\textstyle \frac{n}2})^2 , \infty ) ) } , \ \ k = 0 , 2.
\end{equation}
It follows that for $ \Im \lambda > c ( \alpha ) $, $ I + K ( \lambda, \alpha ) $
in \eqref{eq:QlaK} is invertible on $ L^2_g $. Hence we can invert 
$ y_1^\alpha Q ( \lambda^2 ) y_1^{-\alpha} $ with the mapping property 
given in \eqref{eq:01}.
\end{proof} 

\begin{proof}[Proof of Proposition \ref{p:propu}]
The first step of the proof is a strengthening of Lemma \ref{l:0} for 
solutions of \eqref{eq:L2R}. We claim that if $ u $ solves \eqref{eq:L2R}
and $ u \in L^2_g $ then, near the boundary $ \partial M $, 
\begin{equation}
\label{eq:Qlambda}
V_1 \cdots V_N u \in L^2_g ,  \ \
V_j \in \bCI ( M , T M ) , \ \  V_j y_1 |_{ y_1} = 0 , 
\end{equation}
for any  $N$. The condition on $ V_j $ means that $ V_j $ are tangent to the
boundary $ \partial M $ (for more on spaces defined by such conditions see
\cite[\S 18.3]{ho3}).

To obtain \eqref{eq:Qlambda} 
we see that if $ V $ is a vector field tangent to the boundary of $ \partial M $
then 
\[ \begin{split}  Q ( \lambda^2 ) V u & = F := V f + [ ( y_1 D_{y_1} )^2 , V ] u + 
y_1^2 [ \Delta_{ h(y_1^2 ) } , V ] - i [ ( n +  y_1^2 \gamma ( y ) )  y_1 D_{y_1}  , 
V ] \\
& = Vf + y_1^2 Q_2 u + y_1 Q_1 u  ,
\end{split}  \]
where $ Q_j $ are differential operators of order $ j$. Lemma \ref{l:0} shows
that $ F \in L^2_g $. From Lemma \ref{l:0} we also know that $ y_1 V u \in L^2_g $.
Hence, 
\[  y_1 V u - y_1 Q ( \lambda^2 )^{-1} F \in L^2_g , \ \
Q ( \lambda^2 ) y_1^{-1} (  y_1 V u - y_1 Q ( \lambda^2 )^{-1} F  ) = 0 . \]
But for $ \Im \lambda > c_0 $, Lemma \ref{l:01} shows that
\begin{equation}
\label{eq:injQla} 
 Q ( \lambda^2 ) y_1^{-1} v = 0 , \ \ v \in L^2 (M , d\! \vol_g )
\ \Longrightarrow \ v = 0 . 
\end{equation}
Hence $ V u = Q( \lambda^2 )^{-1} F \in L^2_g $. 
This argument can be iterated showing \eqref{eq:Qlambda}.

We now consider $ P ( \lambda ) $ as an operator on $ X_1 $, formally 
selfadjoint with respect to $ d \mu = dx_1 d \vol_h $. Since we are on 
open manifolds the two $ \CI $ structures agree and we can consider 
$ P ( \lambda ) $ as operator on $ \CI ( M  ) $. 
Since
\[ Q ( \lambda^2)  = y_1^{-i \lambda + \frac n 2}
y_1^2 P ( \lambda ) y_1^{ i \lambda - \frac n 2  }  = x_1^{ -\frac{i\lambda} 2 + 
\frac n 4 } x_1 P ( \lambda ) x_1^{ \frac{ i \lambda }2 - \frac n4}
, \]
we can define
\begin{equation}
\label{eq:Tla} T ( \lambda ) := x_1^{ \frac{i \lambda }2 - \frac n 4 } 
Q ( \lambda^2 )^{-1} 
 x_1^{ -\frac{i\lambda}2 + \frac{n}4 + 1 } , \ \ \Im \lambda > 
 {\textstyle \frac  n 2}, \end{equation}
which satisfies
\begin{gather}
\label{eq:Tla1}
\begin{gathered}   P ( \lambda ) T ( \lambda ) f = f , \ \ f \in \CIc ( X_1 ) , \\ 
T ( \lambda ) :  x_1^{ - \frac{\rho}2 - \frac12 } 
 L^2   \to 
 x_1^{ - \frac{\rho}2 + \frac12 } L^2 , 
 \ \ \rho := \Im \lambda > {\textstyle \frac  n 2} . 
\end{gathered}
\end{gather}
Here we used the fact that $ 2dy_1/y_1 = dx_1/x_1 $
and that 
\[ L^2 ( y^{-n-1}_1 dy_1 d \! \vol_h ) = 
 L^2 \left( x_1^{-\frac n 2 - 1}  {dx_1 d \! \vol_h }\right) = 
x_1^{\frac n 4 + \frac12  } L^2 , \ \  L^2 :=  L^2 ( dx_1 d \!\vol_h ) . \]
Proposition \ref{p:propu} is equivalent to the following 
mapping property of $ T ( \lambda ) $:
\begin{equation}
\label{eq:P0las}
T ( \lambda )  : \CIc ( X_1 ) \longrightarrow 
\bCI ( X_1 ) , \ \ \ \Im \lambda \geq c_0 , \ \ \lambda \notin 
i \NN .  
\end{equation}
To prove \eqref{eq:P0las} we will use a classical tool 
for obtaining asymptotic expansions, the {\em Mellin transform}.
Thus let $ u = T ( \lambda ) f $, $ f \in \CIc (X_1) $. 
By replacing $ u $ by $ \chi ( x_1 ) u $, $ \chi \in \CIc ( (-1,1) ;[0,1])$,  $ \chi = 1 $  near $ 0 $, we can assume that 
\[ u \in \CI ( ( 0,1 ) \times \partial M ) \cap x_1^{-\frac{\rho}2 + \frac12 } L^2 , \ \  P(\lambda ) u = f_1 \in \CIc ( ( 0 , 1 ) \times \partial M ) , \ \ \rho > {\textstyle \frac  n 2} , \]
where smoothness for $ x_1 > 0 $ follows from Lemma \ref{l:0}. In addition 
\eqref{eq:Qlambda} shows that 
\begin{equation}
\label{eq:Qlambdax}
V_1 \cdots V_N u \in x_1^{-\frac \rho 2 + \frac12 } L^2 ( dx_1 d\! \vol_h ) ,  \ \
V_j \in \bCI ( X_1 , T X_1 ) , \ \  V_j x_1 |_{ x_1} = 0 .
\end{equation}
In particular, for any $ k $ 
\begin{equation}
\label{eq:x1N}  x_1^N u \in C^k ( [0,1]\times \SP^1 ) 
\end{equation}
if $ N $ is large enough. 

We define the Mellin transform (for functions with support
in $ [0,1)$) as 
\[  M u ( s, x' ) :=  \int_0^1 u ( x)  x_1^{ s  } \frac {dx_1}{x_1}  .   \]
This is well defined for $ \Re s >  \rho /2  $:
\[  \begin{split} \| M u ( s , x') \|_{ L^2 ( d \! \vol_h ) }^2  
& = \int_{\SP^1} \left|  \int_0^1 x_1^{  s +\frac{ i \lambda}2 - \frac 1 2} 
( x_1^{ - \frac{  i \lambda} 2 - \frac 1 2 } u ( x_1, x' ) ) dx_1 
\right|^2 d \! \vol_h  \\
& \leq \left( \int_0^1 t^{ - \rho + 2 \Re s - 1 } dt \right)
\| x_1^{  \frac \rho 2 - \frac 1 2 } u \|_{ L^2 } 
%\\ & 
= ( 2\! \Re \!s \!- \!\rho)^{-1} \| x_1^{ \frac \rho 2 - \frac 1 2 } u \|_{ L^2 } . 
\end{split} \]
In view of \eqref{eq:Qlambda}  
$ s \longmapsto  M u ( s , x_2 ) $ 
is a holomorphic family of {\em smooth} functions in 
$ \Re s > \rho /2  $. We claim now that $ M u ( s, x' ) 
$ continues meromorphically to all of $ \CC $. In fact, from \eqref{eq:Plag} 
we see that for $ f_2 := \frac14 f_1 $, 
\[  M (x_1 f_2) ( s , x') = M ( {\textstyle{\frac 14}} x_1 P ( \lambda ) u ) ( s , x' ) = 
- s ( s + i \lambda ) M u ( s, x' ) + M ( Q_2 u)  ( s+1, x' ) , \]
where $ Q_2 $ is a second order differential operator built out of
vector fields tangent to the boundary of $ X_1 $. In view of 
\eqref{eq:Qlambdax} $ Q_2 u \in x_1^{- \frac \rho 2 + \frac 12} L^2 $.
Also, $  s \mapsto M ( x_1 f_2 ) ( s, x' ) $ is entire as $ f_1 $
vanishes near $ x_1 = 0 $. 
Hence, 
\[ \begin{split}  M u ( s , x' ) = &  
 \frac{ M ( Q_2^k u ) ( s + k + 1 , x' ) } { s ( s + i \lambda ) \cdots ( s + k ) ( s + k + i \lambda ) } %\\
%& \ \ \ \ \ 
- \sum_{ j=0}^k 
\frac{  M Q_1^{j}( x_1 f_2 ) ( s + j , x' ) }{ 
s ( s + i \lambda ) \cdots ( s + j ) ( s + j + i \lambda ) } ,
\end{split} \]
and that provides a meromorphic continuations with possible 
poles at $ - i \lambda - k $, $ k \in \NN $. 

The Mellin transform inversion formula, a contour deformation
and the residue theorem (applied to simple poles  thanks 
to our assumption that $ i \lambda \notin \ZZ $) then give
\[  u ( x ) \simeq x_1^{i\lambda } ( b_0 (x' ) + x_1 b_1 ( x' ) + 
\cdots ) + a_0  (x' ) + x_1 a_1 ( x' ) + \cdots , \ \ a_j , b_j \in \CI ( \partial M ),  \]
where the regularity of remainders comes from \eqref{eq:x1N}. 
(The basic point is that 
$$ M ( x_1^a \chi ( x_1 ) ) ( s ) = ( s + a )^{-1} F ( s ) , \ \ 
 F ( s ) = - \int x_1^{ a + s }\chi' ( x_1 ) dx_1 , $$ so that
 $ F ( s ) $ is an entire function with $ F ( -a ) = 1 $.)

Since  $P  u ( x ) = 0 $ for $ 0 < x_1 < \epsilon $ the equation
shows that $ b_k $ is determined by $ b_0, \cdots b_{k-1} $. We claim that 
$ b_k \equiv 0 $: if $ b_0 \neq 0 $ then 
\[  |x_1^{ \frac{ \rho} 2 - \frac12} u | = x_1^{- \frac 1 2} | b_0 ( x') | + 
\mathcal O ( x_1^{\frac12} ) \notin L^2 ( dx_1 d\! \vol_h ) . \]
contradicting \eqref{eq:Qlambdax}. It follows that $ u \in \bCI (X_1 ) $
proving \eqref{eq:P0las} and completing the proof of Proposition \ref{p:propu}.
\end{proof}

\section{Meromorphic continuation}
\label{merc}

To prove Theorem \ref{t:3} we recall that $ ( - \Delta_g - \lambda^2 - (\frac n2 )^2 )^{-1} $ is a 
holomorphic family of operators on $ L^2_g $ for $ \lambda^2 + ( \frac n 2 )^2 \notin
\Spec ( - \Delta_g ) $ and in particular for $ \Im \lambda > \frac n 2 $.
 
\begin{proof}[Proof of Theorem \ref{t:3}]
We first show that for 
$ \Im \lambda > 0 $, $  \lambda^2 + {\textstyle \frac14} \notin \Spec(-\Delta_g)$, 
\begin{equation}
\label{eq:Pla0}  P ( \lambda ) u = 0 , \ \ u \in \mathscr X_s , \ \ 
s > - \Im \lambda - \textstyle{\frac12}  \ \Longrightarrow  \ u \equiv 0 .
\end{equation}
In fact, from \eqref{eq:PtoC} we see that $ u \in \bCI ( X ) $. Then 
putting $ v ( y ) := y_1^{ - i \lambda + \frac n 2} j^* (u|_{X_1} )  $, 
$ j: M \to X_1 $,  \eqref{eq:Plag} shows that $ ( - \Delta_g - \lambda^2 - 
(\frac n 2 )^2 ) v = 0 $. For $ \Im \lambda> 0 $ we have $ v \in L^2_g $
and hence from our assumptions, $ v \equiv 0 $. 
Hence $ u |_{X_1 } \equiv 0 $,  and $  u \in \bCI ( X ) $. Lemma \ref{l:ahe}
then shows that $ u \equiv 0 $ proving \eqref{eq:Pla0}.

In view of Lemma \ref{l:kerP} 
we now need to show that $ P ( \lambda )^* w = 0   $,
$ w \in \dot H^{-s} ( X) $, implies that $ w \equiv 0 $. It is 
enough to do this for $ \lambda_0 \notin i \NN $ and $ \Im \lambda \gg 1 $
since invertibility at one point shows that the index of $ P ( \lambda ) $ is $ 0 $.
Then \eqref{eq:Pla0} shows invertibility for all $ \Im \lambda > 0 $, 
$ \lambda^2 + (\frac n 2)^2 \in \Spec ( - \Delta_g ) $. 

Hence suppose that $ P ( \lambda )^* w = 0 $, $ w \in \dot H^{-s} ( X ) $.
Estimate \eqref{eq:hypePest} then shows that $ \supp w \subset \overline X_1 $.
(For $ -1 < x_1 < 0 $ we solve a hyperbolic equation with zero initial 
data and zero right hand side.) 
We now show that $ \supp w \cap  X_1 \neq \emptyset $ (that is 
there is some support in $ x_1 > 0$; in fact
by unique continuation results for second order elliptic operators,
see for instance \cite[\S 17.2]{ho3}, this shows that 
$ \supp w = \overline X_1 $). In other words we 
we need to show that
we cannot have $ \supp w \subset \{x_1 = 0 \} $. Since 
$ \WF ( w ) \subset N^* \partial X_1 $ we can restrict 
$ w $ to fixed values of $ x' \in \partial M $ and the restriction 
and is then a linear combination of $ \delta^{(k)} ( x_1 ) $. But 
\[ P ( \bar \lambda  ) ( \delta^{(k)} ( x_1 ) ) = 
( k + 1 - \bar \lambda / i ) \delta^{(k+1) } (x_1) - i \gamma ( x ) ( 2 i ( k+1)  
- \bar \lambda - i {\textstyle\frac{n-1} 2 } ) \delta^{(k)} ( x_1 ) , \]
and that does not vanish for $ \Im \lambda  > 0 $. 

Mapping property \eqref{eq:P0las} and the definition of  $P ( \lambda ) $
show that for any $ f \in \CIc ( X_1 ) $ (that is $ f $ supported
in $ x_1 > 0 $) there exists $ u \in \bar C^\infty ( X_1 ) $ such 
that $ P ( \lambda ) u = f $ in $ X_1 $. Then (with $L^2 $ inner products
meant as distributional pairings), 
\[ \langle f , w \rangle = \langle P ( \lambda ) u , w \rangle =
\langle u , P ( \lambda ) ^* w \rangle = 0 . \]
Since $ w \in \dot {\mathcal D} ( X_1 ) $ and $ u \in \bar C^\infty ( X_1 ) $ the pairing is justified. In view of support properties of
$ w $, we can find $ f $ such that the left hand side does not 
vanish. This gives a contradiction. 
\end{proof}

\noindent
{\bf Remark.} 
Different proofs
of the existence of $ \lambda $ with $ P( \lambda ) $ invertible 
can be obtained using semiclassical versions 
of the propagation estimates of \S \ref{radest}. That is done for 
$ \Im \lambda_0 \gg \langle \Re \lambda_0\rangle $ in \cite{vasy2}
and for $ \Im \lambda_0 \gg 1 $ in \cite[\S 5.5.3]{res}. 

%\medkskip

Theorem \ref{t:3} guarantees existence of the inverse at many values of $ \lambda$.
Then standard Fredholm analytic theory (see for instance \cite[Theorem C.5]{res})
gives
\begin{equation}
\label{eq:Plainv}
P ( \lambda)^{-1} : \mathscr Y_s \to \mathscr X_s \ \text{ is a 
meromorphic family of operators in $ \Im \lambda > - s -\frac12 $.}
\end{equation}

\begin{proof}[Proof of Theorem \ref{t:1}]
We define
\[ V ( \lambda ) : \CIc ( M ) \to \CIc ( X ) , \ \ \  f ( y ) \longmapsto T f ( x ) 
:= \left\{ \begin{array}{ll} x_1^{ \frac {i  \lambda} 2  - \frac n 4 - 1} (j^{-1})^* f  & x_1 > 0 , \\
\ \ \ \ 0, & x_1 \leq 0 , \end{array} \right. \]
\[ U ( \lambda ) : \bCI ( X ) \to \CI ( M ) , \ \ \ 
u ( x ) \longmapsto y_1^{ - i \lambda + \frac  n 2} j^* ( u|_{X_1} )  , \]
where $ j : M \to X_1 $ is the map defined by $ j ( y) = ( y_1^2, y') $ near
$ \partial M $. Then,  for $ \Im \lambda > \frac n2 $, \eqref{eq:firstconj} 
and \eqref{eq:Plag} show that 
\begin{equation}
\label{eq:Rla}
R ( \textstyle \frac{ n } 2  - i \lambda ) =  U ( \lambda ) P ( \lambda )^{-1} V ( \lambda ) .
\end{equation}
Since $ P ( \lambda )^{-1} : \bCI ( X ) \to \bCI ( X) $ is a meromorphic
family of operators in $ \CC $, Theorem \ref{t:1} follows.
\end{proof}

\noindent
{\bf Remarks.} 
1. The structure of the residue of $ P ( \lambda )^{-1} $ 
is easiest to describe when the pole at $ \lambda_0 $ 
is simple and has rank one. In that
case, 
\begin{gather*}  P ( \lambda ) = \frac{ u \otimes v }{ \lambda - \lambda_0 } + Q ( \lambda, \lambda_0 ) , \ \   u \in \bCI ( X ) , \  \ v \in \!\!\!\!\!\bigcap_{ s > - \Im \lambda_0 - \frac12} 
\dot H^{-s} (\overline X_1 ) \, \\ P ( \lambda_0 ) u = 0 , \ \ P ( \bar \lambda_0 ) v = 0 , \end{gather*}
and where $ Q ( \lambda, \lambda_0 ) $ is holomorphic near $ \lambda_0 $. 
We note that $ u \in \CI ( X) $ because of \eqref{eq:PtoC}. The regularity 
of $ v \in \dot H^{-s}$, $ s >  - \Im \lambda_0 - \frac12 $ just misses the 
threshold for smoothness -- in particular there is no contradiction with 
Theorem \ref{t:3}! 

\noindent
2. The relation \eqref{eq:Rla} between $ R ( \frac n 2 - i  \lambda  ) $ and 
$ P ( \lambda ) $ shows that unless the elements of the kernel of 
$ P ( \bar \lambda ) $ are supported on $ \partial X_1  = \{ x_1 = 0 \} $ then 
the multiplicities of the poles of $ R ( \frac n 2 - i \lambda ) $ agree. 

For completeness we conclude with the proof of the following standard fact:
\begin{prop}
\label{p:hintz}
If $ R ( \zeta ) := ( - \Delta_g - \zeta ( n - \zeta ) )^{-1} $ for 
$ \Re \zeta > n $ then 
\begin{equation}
\label{eq:hintz0}  R ( \zeta ) : L^2 ( M , d \! \vol_g ) \to L^2 ( M , d \! \vol_g ) , \end{equation}
is meromorphic for $ \Re \zeta > \frac n 2 $ with simple poles where
$ \zeta ( n - \zeta ) \in \Spec ( - \Delta_g ) $.
\end{prop}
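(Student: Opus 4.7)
The plan is to identify the meromorphic continuation of $R(\zeta)$ (given by \eqref{eq:Rla} together with Theorem \ref{t:3}) with the self-adjoint resolvent $(-\Delta_g - \zeta(n-\zeta))^{-1}$ on $L^2(M, d\vol_g)$, and to use spectral theory to deduce meromorphy with simple poles. The key calculation is that the smoothness of $P(\lambda)^{-1} V(\lambda) f$ up to $\partial X$, provided by the radial source estimate, ensures $R(\zeta) f \in L^2_g$ precisely in the right half-plane $\Re \zeta > n/2$.

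First I would show that for every $f \in \CIc(M)$ and every non-pole $\zeta = \frac{n}{2} - i\lambda$ with $\Im\lambda > 0$, one has $R(\zeta) f \in L^2_g$. Since $f$ is compactly supported in $M$, $V(\lambda) f \in \CIc(X)$ is supported in $\{x_1 \geq \epsilon\}$ for some $\epsilon > 0$, hence lies in $\mathscr Y_s$ for every $s$. By Theorem \ref{t:3}, $u := P(\lambda)^{-1} V(\lambda) f \in \mathscr X_s \subset \bar H^{s+1}(X^\circ)$ for any $s > -\Im\lambda - \frac12$. Since $P(\lambda) u = V(\lambda) f \in \bCI(X)$ and the threshold in \eqref{eq:PtoC} is $s_+ + 1 = \frac12 - \Im\lambda$, the radial source estimate upgrades this to $u \in \bCI(X)$. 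A direct change of variable then yields
\[
\|R(\zeta) f\|_{L^2_g}^2 \;=\; \|U(\lambda) u\|_{L^2_g}^2 \;=\; \tfrac12 \int_{X_1} |u(x_1,x')|^2 \, x_1^{\Im\lambda - 1} \, dx_1\, d\vol_h,
\]
which is finite at $x_1 = 0$ exactly when $\Im\lambda > 0$, because $u$ is bounded up to the boundary.

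Next, the conjugation identity \eqref{eq:firstconj} together with $P(\lambda) u = V(\lambda) f$ gives $(-\Delta_g - \mu) R(\zeta) f = f$ with $\mu := \zeta(n-\zeta)$. Whenever $\mu \notin \Spec(-\Delta_g)$, the spectral theorem guarantees a unique $L^2_g$-solution, which must coincide with $R(\zeta) f$. Hence on the dense subspace $\CIc(M) \subset L^2_g$, $R(\zeta)$ agrees with the bounded self-adjoint resolvent $(-\Delta_g - \mu)^{-1}$, and so extends by density to a bounded operator $L^2_g \to L^2_g$ there.

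Finally, the map $\zeta \mapsto \mu = \frac{n^2}{4} - (\zeta - \frac{n}{2})^2$ is a biholomorphism from $\{\Re\zeta > n/2\}$ onto $\CC \setminus [n^2/4, \infty)$, and $\Spec(-\Delta_g) \subset [0, \infty)$. Since the meromorphic extension of Theorem \ref{t:1} has discrete poles, and since at $\zeta_0$ with $\mu_0 \notin \Spec(-\Delta_g)$ the operator $R(\zeta)$ is holomorphic and bounded on $L^2_g$ by the previous step, any pole in this half-plane must occur at $\mu_0 \in \Spec(-\Delta_g) \cap [0, n^2/4)$. Near such a pole, $n - 2\zeta_0 \neq 0$ makes $\zeta \mapsto \mu$ a local biholomorphism, and the simple pole of the self-adjoint resolvent at the eigenvalue $\mu_0$---with residue the orthogonal projection onto the finite-dimensional eigenspace $\ker(-\Delta_g - \mu_0)$---transfers to a simple pole of $R(\zeta)$ at $\zeta_0$. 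The main obstacle is the first step: the radial source threshold $s_+ + 1 = \frac12 - \Im\lambda$ in \eqref{eq:PtoC} matches the condition $\Im\lambda > 0$ exactly, so the application of \eqref{eq:PtoC} must be done with $r$ chosen just above this threshold to obtain the smoothness of $u$ up to $\partial X$ needed for $L^2_g$-membership.
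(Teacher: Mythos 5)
Your proof is correct in substance and takes a genuinely different route from the paper's. The paper proves \eqref{eq:hintz0} by establishing the weighted mapping property \eqref{eq:hintz1} of $P(\lambda)^{-1}$ directly: it deduces the Sobolev embeddings \eqref{eq:Sob} from Hardy's inequality and interpolation (valid for $0<\Im\lambda\le 1$), and then uses a maximum-principle argument on $\langle p(\zeta)R(\zeta)f,g\rangle$ to handle $\zeta$ near the segment $[\frac n2,n]$, where the paper's invocation of the spectral theorem does not directly apply. You instead observe that for $f\in\CIc(M)$ the solution $u=P(\lambda)^{-1}V(\lambda)f$ is smooth up to $\partial X$ by \eqref{eq:PtoC} (the threshold $s_++1=\tfrac12-\Im\lambda$ is exactly met), compute $\|U(\lambda)u\|_{L^2_g}^2=\tfrac12\int x_1^{\Im\lambda-1}|u|^2\,d\vol_h\,dx_1<\infty$ for $\Im\lambda>0$, and identify $R(\zeta)f$ with the self-adjoint resolvent $(-\Delta_g-\mu)^{-1}f$ by uniqueness. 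This replaces the Hardy/maximum-principle machinery with the spectral theorem and buys simplicity; the paper's route, by contrast, yields quantitative weighted $L^2$ estimates for $P(\lambda)^{-1}$ as a byproduct, which are of independent interest.

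Two small points you should make explicit. First, the identification $R(\zeta)f=(-\Delta_g-\mu)^{-1}f$ uses that $R(\zeta)f$ lies in the operator domain of the self-adjoint $-\Delta_g$: this follows from $R(\zeta)f\in L^2_g$ together with $-\Delta_g R(\zeta)f=f+\mu R(\zeta)f\in L^2_g$ and essential self-adjointness of $-\Delta_g$ on $\CIc(M)$ (completeness of $g$). Second, when you transfer the simple pole of the spectral resolvent at $\mu_0$, you are using that $\mu_0$ is an isolated eigenvalue of finite multiplicity; in this self-contained framework that fact is itself a consequence of the meromorphy with finite-rank poles from Theorem \ref{t:1}, so the logic should be arranged to derive discreteness of $\Spec(-\Delta_g)\cap[0,n^2/4)$ from the poles of $R(\zeta)$ first, and only then invoke the standard self-adjoint residue formula. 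Neither is a gap in substance, but both deserve a sentence.
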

\begin{proof}
The spectral theorem implies that $ R ( \zeta ) $ is holomorphic on 
$ L^2_g $ in $ \{ \Re \zeta > \frac n 2 \} \setminus  [ \frac n 2  , n ] $. 
In the $ \lambda $-plane that corresponds to $ \{ \Im \lambda > 0 \}
\setminus i [ 0 , \frac n 2 ] $. 
%We claim that $ P ( \lambda )^{-1} 
%: \mathscr Y_s \to \mathscr X_s $, $ \Im \lambda  > \max{-s - \frac12, 0 } $
%has only purely imaginary poles. In fact, if $ P ( \lambda ) u = 0 $, 
%$ u \in \mathscr X_s $ then $ u \in \bCI( X )$ and 
%$ v ( y ) = y_1^{ \frac n2 - i \lambda} j^* ( u|_{X_1} ) \in L^2_g $, 
%satisfies $ ( - \Delta_g - \lambda^2 - (\frac n 2)^2 ) v = 0 $.
%The non-negativity 
%of $ - \Delta_g $ imply that $\lambda^2 + ( \frac n 2)^2 \geq 0 
% $ and hence $ \lambda \in i \RR $. 

From \eqref{eq:Tla} and \eqref{eq:Tla1} we see that 
boundeness of $ R ( \frac n 2 - i \lambda ) $ on $ L^2_g ( M ) $ is equivalent
to 
\begin{equation}
\label{eq:hintz1} P ( \lambda)^{-1} : x_1^{ -\frac \rho 2 - \frac12} L^2 ( X_1 ) \to 
x_1^{ - \frac \rho 2 + \frac 12} L^2 ( X_1 )  , \ \ \rho := \Im \lambda .
\end{equation}
We will first 
prove \eqref{eq:hintz} for $ 0 < \rho \leq 1$. From Theorem \ref{t:3} we know that
except at a discrete set of poles, $ P ( \lambda )^{-1} : 
\bar H^s ( X_1 ) \to \bar H^{s+1} ( X_1 ) $, $ s > - \rho - \frac12$.
We claim that for $ - 1 \leq s < - \frac12  $ 
\begin{equation}
\label{eq:Sob}
x_1^{s } L^2 ( X_1 ) \hookrightarrow \bar H^s ( X_1 ) , \ \ 
\bar H^{s+1} ( X_1 ) \hookrightarrow x_1^{s+1} L^2 ( X_1 ) .
\end{equation}
By duality the first inclusion follows from the inclusion
\begin{equation}
\label{eq:hintz} 
\dot H^{r} ( X_1 ) \hookrightarrow x_1^{r} L^2 , \ \ 0 \leq r \leq 1 .
\end{equation}
Because of interpolation we only need to prove this for $ r = 1$ in which
case it follows from Hardy's inequality, 
$\int_0^{\infty } | x_1^{-1} u ( x_1 ) |^2 dx_1 \leq 4 \int_0^\infty 
| \partial_{x_1}^2 u ( x_1 ) |^2 dx_1 $. The second inclusion 
follows from \eqref{eq:hintz} and the fact that 
$ \bar H^r ( X_1 ) = \dot H^r ( X_1) $ for $ 0 \leq r < \frac12 $ -- see
\cite[Chapter 4, (5.16)]{Ta}. 
We can now take $ s = - \frac \rho 2 - \frac 12 $ in \eqref{eq:Sob} 
which for $ 0 < \rho \leq 1 $ is in the allowed range. That proves
\eqref{eq:hintz1} for $ 0 < \Im \lambda  \leq 1$, except at the poles
and consequently establishes \eqref{eq:hintz0} for $ \frac n 2 < \Re s \leq
\frac n 2 + 1 $. We can choose a polynomial $ p ( s ) $ such
that $ p ( s ) R ( s ) : \CIc ( M ) \to \CI ( M ) $ is holomorphic 
near $ [ \frac n2 , n ] $. The maximum principle applied to 
$ \langle p ( s ) R ( s ) f, g \rangle$, $ f, g \in \CIc ( M ) $ 
now proves that $ p ( s ) R ( s )$ is bounded on $ L^2_g ( M ) $ near 
$ [ \frac n 2 , n ] $ concluding the proof.
\end{proof}

\def\arXiv#1{\href{http://arxiv.org/abs/#1}{arXiv:#1}}

\end{document}